\theoremstyle{definition}
\newtheorem{theorem}{Theorem}[section]
\newtheorem{definition}{Definition}[section]
\newtheorem{lemma}{Lemma}[section]
\numberwithin{equation}{section}%
\numberwithin{table}{section}%
\numberwithin{figure}{section}
\def\3bar{{|\hspace{-.02in}|\hspace{-.02in}|}}
\def\threeBar{{\big|\hspace{-.02in}\big|\hspace{-.02in}\big|}}
\def\td{\text{div}}
\def\tc{\text{curl}}
\def\d{\text{d}}
\begin{document}
\title{A priori and a posteriori error estimates for the quad-curl eigenvalue problem }

\author{Lixiu Wang}
\email{lxwang@csrc.ac.cn}
\address{Beijing Computational Science Research Center, Beijing, China}

\author{Qian Zhang(\Letter)}
\email{go9563@wayne.edu}
\address{Department of Mathematics, Wayne State University, Detroit, MI 48202, USA. }

\author{Jiguang Sun}
\email{jiguangs@csrc.ac.cn; jiguangs@mtu.edu}
\address{Department of Mathematical Sciences, Michigan Technological University, Houghton, MI 49931, USA}
\author{Zhimin Zhang}
\email{zzhang@math.wayne.edu;zmzhang@csrc.ac.cn}
\thanks{This work is supported in part by the National Natural Science Foundation of China grants NSFC 11871092, NSFC 11926356, and NSAF 1930402.}
\address{Beijing Computational Science Research Center, Beijing, China; Department of Mathematics, Wayne State University, Detroit, MI 48202, USA.}
\keywords{The quad-curl problem, a priori error estimation, a posteriori error estimation, curl-curl conforming elements}
\subjclass[2000]{subject class}
\date{\today}
\maketitle
\begin{abstract}
	In this paper, we propose a new family of $H(\tc^2)$-conforming elements for the quad-curl eigenvalue problem in 2D.
	The accuracy of this family is one order higher than that in \cite{WZZelement}. We prove a priori and a posteriori error estimates.
	The a priori estimate of the eigenvalue with a convergence order $2(s-1)$ is obtained if the eigenvector $\bm u\in \bm H^{s+1}(\Omega)$.
	For the a posteriori estimate, by analyzing the associated source problem,
	we obtain lower and upper bounds for the eigenvector in an energy norm and an upper bound for the eigenvalues.
		Numerical examples are presented for validation.
\end{abstract}
\section{Introduction}
The quad-curl equation appears in various applications, such as the inverse electromagnetic scattering theory \cite{Cakoni2010IP, Cakoni2017A, Sun2016A} or
magnetohydrodynamics \cite{Zheng2011A}. The corresponding quad-curl eigenvalue problem plays a fundamental role in the analysis and computation of the
electromagnetic interior transmission eigenvalues \cite{Monk2012Finite, sun2011iterative}.
To compute eigenvalues,  one usually starts with the corresponding source problem \cite{babuvska1991eigenvalue, boffi2010, Sun2016Finite}.
Some methods have been proposed for the source problem, i.e., the quad-curl problem,
in \cite{WZZelement, Zheng2011A,Sun2016A,Qingguo2012A,Brenner2017Hodge,quadcurlWG, Zhang2018M2NA,Chen2018Analysis164, Zhang2018Regular162,SunZ2018Multigrid102,WangC2019Anew101,BrennerSC2019Multigrid100}.
Recently, a family of $H(\tc^2)$-conforming finite elements using incomplete $k$-th order polynomials is proposed in \cite{WZZelement} for the qual-curl problem.
In this paper, we construct a new family of elements by using the complete $k$-th order polynomials.
Due to the large kernel space of the curl operator, the Helmholtz decomposition of splitting an arbitrary vector field into the irrotational and solenoidal components 
plays an important role in the analysis. However, in general, the irrotational component is not $H^2$-regular when $\Omega$ is non-convex. Therefore, we propose a new decomposition for $H_0(\tc^2;\Omega)$, which further splits the irrotational component into a function in $H^2(\Omega)$ and a function in the kernel space of curl operator.

There exist a few results on the numerical methods for the quad-curl eigenvalue problem.
The problem was first proposed in \cite{Sun2016A} by Sun, who applied a mixed finite element method and proved an a priori error estimate.
Two multigrid methods based on the Rayleigh quotient iteration and the inverse iteration with fixed shift were proposed and analyzed in \cite{han2020shifted}.
In the first part of the paper, we apply the classical framework of Babu\v{s}ka and Osborn \cite{babuvska1991eigenvalue,osborn1975spectral} to 
prove an a priori estimate. 

At reentrant corners or material interfaces, the eigenvectors feature strong singularities \cite{Nicaise2018Singularities161}.
For more efficient computation, adaptive local refinements are considered.
A posteriori error estimators are essential for the adaptive finite element methods.
We refer to \cite{cochez2007robust, beck2000residual,monk1998posteriori,schoberl2008posteriori} for the a posteriori estimates of source problems
and \cite{dai2008convergence,boffi2019posteriori,boffi2017residual} for eigenvalue problems.
In terms of the quad-curl eigenvalue problem, to the authors' knowledge, no work on a posteriori error estimations has been done so far.
To this end, we start by relating the eigenvalue problem to a source problem. An a posteriori error estimator for the source problem is constructed,
The proof uses the new decomposition and makes no additional regularity assumption.
Then we apply the idea of \cite{dai2008convergence} to obtain an a posteriori error estimate for the eigenvalue problem.


The rest of this paper is organized as follows. In Section 2, we present some notations, the new elements, the new decomposition,
and an $H(\tc^2)$ Cl\'ement interpolation. In Section~3, we derive an a priori error estimate for the quad-curl eigenvalue problem.
In Section 4, we prove an a posteriori error estimate. Finally, in Section 5, we show some numerical experiments.

\section{Notations and basis tools}
\subsection{Notations}Let $\Omega\in\mathbb{R}^2$ be a simply-connected Lipschitz domain.
For any subdomain $D\subset\Omega$, $L^2(D)$ denotes the space of square integrable functions on $D$ with norm $\|\cdot\|_D$. 
If $s$ is a positive integer,
$H^s(D)$ denotes the space of scalar functions in $L^2(D)$ whose derivatives up to order $s$ are also in $L^2(D)$. 
If $s=0$, $H^0(D)=L^2(D)$. When $D=\Omega$, we omit the subscript $\Omega$ in the notations of norms.
For vector functions, $\bm L^2(D) = (L^2(D))^2$ and $\bm H^s(D) = (H^s(D))^2$.

Let ${\bm u}=(u_1, u_2)^t$ and ${\bm w}=(w_1, w_2)^t$, where the superscript $t$ denotes the transpose.
Then ${\bm u} \times {\bm w} = u_1 w_2 - u_2 w_1$ and $\nabla \times {\bm u} = \partial u_2/\partial x_1 - \partial u_1/\partial x_2$.
For a scalar function $v$, $\nabla \times v = (\partial v/\partial x_2, - \partial v/\partial x_1)^t$.
We now define a space concerning the curl operator
\begin{align*}
H(\text{curl}^2;D)&:=\{\bm u \in {\bm L}^2(D):\; \nabla \times \bm u \in L^2(D),\;(\nabla \times)^2 \bm u \in \bm L^2(D)\},
\end{align*}
whose norm is given by
\[
\left\|\bm u\right\|_{H(\tc^2;D)}=\sqrt{(\bm u,\bm u)+(\nabla\times\bm u,\nabla\times\bm u)+((\nabla\times)^2\bm u,(\nabla\times)^2\bm u)}.
\]
The spaces $H_0(\text{curl}^2;D)$, $H_0^1(D)$, and $H(\text{div}^0;D)$ are defined, respectively, as
\begin{align*}
&H_0(\text{curl}^2;D):=\{\bm u \in H(\text{curl}^2;D):\;{\bm n}\times\bm u=0\; \text{and}\; \nabla\times \bm u=0\;\; \text{on}\ \partial D\},\\
&H_0^1(D):=\{u \in H^1(D):u=0\;\; \text{on}\ \partial D\},\\
&H(\text{div}^0;D) :=\{\bm u\in {\bm L}^2(D):\; \nabla\cdot \bm u=0\}.
\end{align*}

Let \,$\mathcal{T}_h\,$ be a triangular partition of $\Omega$.
Denote by $\mathcal{N}_h$ and $\mathcal{E}_h$ the sets of vertices and  edges. Let ${\bm \tau}_e$ be the tangent vector of an edge $e \in \mathcal{E}_h$.
We refer to $\mathcal{N}_h^{\text{int}}$ and  $\mathcal{E}_h^{\text{int}}$  as the sets of vertices and edges in the interior of $\Omega$, respectively.
 Let $\mathcal{N}_h(T)$ and $\mathcal{E}_h(T)$ be the sets of vertices and edges on the element $T$. Denote by $h_T$ the diameter of
 $T \in \mathcal{T}_h$ and $\displaystyle h = \max_{T\in \mathcal {T}_h}h_T$. In the following,
we introduce some  subdomains called patches:
\begin{itemize}
  \item $\omega_T$: the union of elements sharing a common edge with $T$, $T\in\mathcal{T}_h$;
  \item $\omega_e$: the union of elements sharing $e$ as an edge, $e\in\mathcal{E}_h$;
  \item $\omega_v$: the union of elements sharing $v$ as  a vertex, $v\in \mathcal{N}_h$.
\end{itemize}
We use $P_k$ to represent the space of polynomials on an edge or on a subdomain $D\subset\Omega$ with degrees at most $k$ and $\bm P_k(D)=\left(P_k(D)\right)^2$.



\subsection{A decomposition of $H_0(\tc^2;\Omega)$ }
We mimic the proof of \cite[Prop. 5.1]{dhia1999singular} to obtain a decomposition of the space $H_0(\tc^2;\Omega)$, which plays a critical role in the analysis.
\begin{lemma}\label{Helm}
	Let $\nabla H_0^1(\Omega)$ be the set of gradients of functions in $H_0^1(\Omega)$.
	Then $\nabla H_0^1(\Omega)$ is a closed subspace of $H_0(\mathrm{curl}^2;\Omega)$ and
	\begin{align}\label{decom-00}
H_0(\mathrm{curl}^2;\Omega)=X\oplus \nabla H_0^1(\Omega),
\end{align}
where $X=\left\{\bm u\in H_0(\tc^2;\Omega)\big|(\bm u,\nabla p)=0,\;\;\forall p\in H_0^1(\Omega)\right\}.$
Namely, for $\bm u\in H_0(\mathrm{curl}^2;\Omega)$,
$\bm u=\bm u^0+\bm u^{\perp}$
with $\bm u^0\in \nabla H_0^1(\Omega)$ and $\bm u^{\perp}\in X.$
Furthermore, $\bm u^{\perp}$ admits the splitting
\begin{align}\label{decom-01}
\bm u^{\perp}=\nabla \phi+\bm v,
\end{align}
where $\phi\in H_0^1(\Omega)$ and $\bm v \in \bm H^2(\Omega)$ satisfying
\begin{align}
&\|\bm v\|_2\leq C\|\nabla\times\bm u^{\perp}\|_1.\label{decom-02}\\
&\|\nabla\phi\|\leq C\left(\|\nabla\times\bm u^{\perp}\|_1+\|\bm u^{\perp}\|\right).\label{decom-03}
\end{align}
\end{lemma}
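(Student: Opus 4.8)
The plan is to derive \eqref{decom-00} from the orthogonal projection theorem in a Hilbert space, and then to obtain \eqref{decom-01}--\eqref{decom-03} by constructing a regular vector potential for $\nabla\times\bm u^{\perp}$ through a regularity-preserving right inverse of the divergence. For \eqref{decom-00}, first I would check that $\nabla p\in H_0(\tc^2;\Omega)$ for every $p\in H_0^1(\Omega)$: indeed $\nabla\times\nabla p=0$ and $(\nabla\times)^2\nabla p=0$, while on $\partial\Omega$ the tangential trace $\bm n\times\nabla p=\partial_{\bm\tau}p$ vanishes because $p|_{\partial\Omega}=0$, and $\nabla\times\nabla p\equiv0$. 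The subspace $\nabla H_0^1(\Omega)$ is closed in $H_0(\tc^2;\Omega)$ by the Poincar\'e inequality: a sequence $\nabla p_n$ that is Cauchy in the $H(\tc^2)$-norm is Cauchy in $\bm L^2(\Omega)$, hence $p_n\to p$ in $H_0^1(\Omega)$ and the limit equals $\nabla p$. Since $\nabla\times\nabla p=0$, the $H(\tc^2)$-inner product of $\bm u$ with $\nabla p$ collapses to $(\bm u,\nabla p)$; therefore $X$ is exactly the orthogonal complement of the closed subspace $\nabla H_0^1(\Omega)$ inside the Hilbert space $H_0(\tc^2;\Omega)$, and the projection theorem gives \eqref{decom-00}, with $\bm u^0=\nabla p$ where $p\in H_0^1(\Omega)$ solves $(\nabla p,\nabla q)=(\bm u,\nabla q)$ for all $q\in H_0^1(\Omega)$, and $\bm u^{\perp}=\bm u-\bm u^0$.

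For the refined splitting, set $w:=\nabla\times\bm u^{\perp}$. Because $(\nabla\times)^2\bm u^{\perp}=\nabla\times w\in\bm L^2(\Omega)$, one gets $w\in H^1(\Omega)$ with $\|w\|_1=\|\nabla\times\bm u^{\perp}\|_1$, and moreover $\int_\Omega w=\int_{\partial\Omega}\bm u^{\perp}\cdot\bm\tau\,\d s=0$ since $\bm n\times\bm u^{\perp}=0$ on $\partial\Omega$. I would then invoke a regularity-preserving right inverse of the divergence on the Lipschitz domain $\Omega$ (a Bogovskii-type operator, which requires no convexity) to produce $\bm z\in\bm H^2(\Omega)\cap\bm H_0^1(\Omega)$ with $\td\,\bm z=w$ and $\|\bm z\|_2\le C\|w\|_1$, and put $\bm v:=(-z_2,z_1)^t$. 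Then $\bm v\in\bm H^2(\Omega)$, $\bm v|_{\partial\Omega}=0$, $\nabla\times\bm v=\td\,\bm z=w$, and \eqref{decom-02} follows from $\|\bm v\|_2=\|\bm z\|_2\le C\|w\|_1=C\|\nabla\times\bm u^{\perp}\|_1$. Now $\nabla\times(\bm u^{\perp}-\bm v)=w-w=0$ on the simply-connected domain $\Omega$, so $\bm u^{\perp}-\bm v=\nabla\phi$ for some $\phi\in H^1(\Omega)$, unique up to an additive constant. On the boundary, $\partial_{\bm\tau}(\phi|_{\partial\Omega})=(\bm u^{\perp}-\bm v)\cdot\bm\tau=\bm u^{\perp}\cdot\bm\tau-\bm v\cdot\bm\tau=0$, because $\bm u^{\perp}\cdot\bm\tau=\bm n\times\bm u^{\perp}=0$ and $\bm v\cdot\bm\tau=\bm z\cdot\bm n=0$ (recall $\bm z\in\bm H_0^1(\Omega)$); hence $\phi$ is constant on the connected boundary, and normalizing that constant to zero gives $\phi\in H_0^1(\Omega)$. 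Finally \eqref{decom-03} is immediate: $\|\nabla\phi\|=\|\bm u^{\perp}-\bm v\|\le\|\bm u^{\perp}\|+\|\bm v\|_2\le\|\bm u^{\perp}\|+C\|\nabla\times\bm u^{\perp}\|_1$.

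The first step and the trace bookkeeping are routine; the heart of the matter is the construction of $\bm v$ in the second step — a vector potential for $\nabla\times\bm u^{\perp}$ that is \emph{fully} $\bm H^2(\Omega)$-regular and has vanishing normal (equivalently tangential, $\bm v\cdot\bm\tau=0$) trace, with the bound controlled by $\|\nabla\times\bm u^{\perp}\|_1$. A naive stream-function construction would amount to solving $-\Delta\zeta=w$ with a Neumann condition, whose solution is not $H^2$ when $\Omega$ is non-convex; this is precisely why one must instead use the Bogovskii-type right inverse of the divergence, which is bounded $H^1\to\bm H^2$ with zero trace on any bounded Lipschitz (indeed, finite-union-of-star-shaped) domain, with no convexity assumption. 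Keeping careful track of the tangential traces of $\bm u^{\perp}$ and $\bm v$ is what upgrades the leftover curl-free field from being merely the gradient of an $H^1$ function to the gradient of an $H_0^1$ function, which is what makes the decomposition compatible with \eqref{decom-00}.
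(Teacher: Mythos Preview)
Your argument is correct and reaches the same conclusion, but the construction of the regular piece $\bm v$ is genuinely different from the paper's. The paper extends $\bm u^{\perp}$ by zero to a larger \emph{smooth} domain $\mathcal O\supset\bar\Omega$, solves $-\Delta\psi=\nabla\times\tilde{\bm u}$ there (so that smooth-boundary elliptic regularity gives $\psi\in H^3(\mathcal O)$), writes $\tilde{\bm u}=\nabla\times\psi+\nabla p$, and then uses a Sobolev extension of $p$ from the exterior annulus back to $\Omega$ to split off an $H^2$ part plus the gradient of an $H_0^1$ function. Your route stays on $\Omega$ and replaces all of this with a single application of the Bogovski\u{\i} right inverse of the divergence, then a $90^\circ$ rotation; the trace bookkeeping that forces $\phi\in H_0^1(\Omega)$ is an added bonus of having $\bm z\in\bm H_0^1(\Omega)$. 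Your approach is shorter and more intrinsic; the paper's approach trades the Bogovski\u{\i} machinery for elementary elliptic regularity on a smooth auxiliary domain together with a Stein-type extension.

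One small point worth tightening: the standard $W_0^{m,q}\!\to W_0^{m+1,q}$ mapping property of the Bogovski\u{\i} operator (e.g.\ Galdi) is stated for inputs in $W_0^{m,q}$, not merely $W^{m,q}$ with mean zero. You only recorded $w\in H^1(\Omega)$ with $\int_\Omega w=0$, but in fact $w=\nabla\times\bm u^{\perp}\in H_0^1(\Omega)$ because $\nabla\times\bm u^{\perp}=0$ on $\partial\Omega$ is part of the definition of $H_0(\mathrm{curl}^2;\Omega)$. Once you note this, the invocation of Bogovski\u{\i} to obtain $\bm z\in\bm H^2(\Omega)\cap\bm H_0^1(\Omega)$ with $\|\bm z\|_2\le C\|w\|_1$ is on solid ground, and the rest of your argument goes through verbatim.
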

\begin{proof}
The proof of \eqref{decom-00} can be found in \cite{WZZelement}. We only need to prove \eqref{decom-01}.
Let $\mathcal{O}$ be a bounded, smooth, simply-connected open set with $\bar{\Omega}\subset\mathcal{O}$.
For any $\bm u^{\perp}\in X$, we can extend $\bm u^{\perp}$ in the following way:
\begin{align*}
\bm {\tilde{u}}&=\begin{cases}
\bm u^{\perp},&\Omega,\\
0,&\mathcal{O}-\bar{\Omega}.\\
\end{cases}
\end{align*}
Obviously, $\bm {\tilde{u}}\in H_0(\tc^2;\mathcal{O})$ and $\nabla \times \bm {\tilde{u}}\in H_0^1(\mathcal{O})$.
Now, we consider the following problem:
Find $\psi$ defined in $\mathcal{O}$ such that
\begin{align}\label{lap-01}
-\triangle \psi&=\nabla \times \bm {\tilde{u}},\ \text{in}\ \mathcal{O},\\
\psi&=0,\ \text{on}\ \partial \mathcal{O}.\label{lap-02}
\end{align}
Since $\nabla \times \bm {\tilde{u}}\in H_0^1(\mathcal{O})$ and $\mathcal O$ has a smooth boundary, there exists a function $\psi\in H^3(\mathcal{O})$ satisfying \eqref{lap-01} and \eqref{lap-02} and
\begin{align}\label{reg1}
	\left\|\psi\right\|_{3,\mathcal O}\leq C\|\nabla \times \bm {\tilde{u}}\|_{1,\mathcal O}.
\end{align}
In addition, \eqref{lap-01} can be rewritten as
$
\nabla\times(\nabla\times \psi-\bm {\tilde{u}})=0.
$
Based on \cite[Thm. 2.9]{Girault2012Finite}, there exists a unique function $p$ of $H^1(\mathcal{O})/\mathbb{R}$ such that
\begin{align}\label{proof-lem}
-\nabla\times \psi+\bm {\tilde{u}}=\nabla p.
\end{align}
Now, we restrict \eqref{proof-lem} to the domain $\mathcal{O}-\bar{\Omega}$ and obtain
\begin{align}
\nabla p=-\nabla\times \psi\in H^2(\mathcal{O}/\bar{\Omega}).
\end{align}
Using the extension theorem \cite{chen2016sobolev}, we can extend $p\in H^3(\mathcal{O}/\bar{\Omega})$ to $\tilde{p}$ defined on $\mathcal O$ satisfying
\begin{align}\label{reg2}
	\left\|\tilde{p}\right\|_{3,\mathcal O}\leq C\left\|{p}\right\|_{3,\mathcal O\slash \bar{\Omega}}\leq C\left\|{\nabla p}\right\|_{2,\mathcal O\slash \bar{\Omega}}\leq C\left\|{\nabla \times \psi}\right\|_{2,\mathcal O\slash \bar{\Omega}},
\end{align}
where we have used Poincar\'e-Friedrichs inequality for $p\in H^3(\mathcal O\slash\bar{\Omega})$ since we can choose $p$ for which $\int_{\mathcal O\slash\bar{\Omega}}p=0.$ 
Restricting on $\Omega$, we have
\begin{align*}
\bm {{u}^{\perp}}=\underbrace{\nabla\times\psi+\nabla \tilde{p}}_{\in H^2(\Omega)}+\nabla\underbrace{(p-\tilde{p})}_{\in H^1(\Omega)}
\triangleq\bm v+\nabla \phi.
\end{align*}
Note that $\phi=p-\tilde p\in H_0^1(\Omega)$ since $\tilde p$ is the extension of $p$.
Therefore, \eqref{decom-01} is proved.
Combining \eqref{reg1} and \eqref{reg2}, we obtain
\[\|\bm v\|_{2,\Omega}=\|\nabla\times \psi+\nabla \tilde{p}\|_{2,\Omega}\leq\|\nabla\times \psi+\nabla \tilde{p}\|_{2,\mathcal{O}}\leq C\|\nabla\times\psi\|_{2,\mathcal{O}}\leq C\|\nabla\times\bm {\tilde u}\|_{1,\mathcal{O}}=C\|\nabla\times\bm { u}^{\perp}\|_{1,\Omega}\]
and
\[\|\nabla \phi\|_{\Omega}=\|\bm { u}^{\perp}-\bm v\|_{\Omega}\leq\|\bm { u}^{\perp}\|_{\Omega}+\left\|\bm v\right\|_{\Omega}\leq \|\bm { u}^{\perp}\|_{\Omega}+\left\|\bm v\right\|_{2,\Omega}\leq C\left(\|\bm { u}^{\perp}\|_{\Omega}+\|\nabla\times\bm u^{\perp}\|_{1,\Omega}\right).\]
\end{proof}

\subsection{A new family of $H(\tc^2)$-conforming elements}
In this subsection, we propose a new family of $H(\tc^2)$-conforming finite elements. 
The new elements can lead to one order higher accuracy than the elements in \cite{WZZelement} when the solution $\bm u$ is smooth enough.
\begin{definition}\label{tri-dof-def}
	For an integer $k \geq 4$, an ${H}(\mathrm{curl}^2)$-conforming element is given by the triple:
	\begin{equation*}
	\begin{split}
	&{T}\;\text{is a triangle},\\
	&P_{T} = \bm P_k(T),\\
	&\Sigma_{{T}} = \bm M_{{p}}( {\bm u }) \cup \bm M_{{e}}({\bm u}) \cup \bm M_{K}({\bm u}),
	\end{split}
	\end{equation*}
	where $\Sigma_{{T}}$ is the set of DOFs (degree of freedom) defined as follows.
	\begin{itemize}
		\item $\bm M_{{p}}({\bm u})$ is the set of DOFs on all vertex nodes and edge nodes ${p}_{i}$:
		\begin{equation}\label{2def2}
	\bm M_{{p}}( {\bm u})=\left\{{\nabla}\times {\bm u}({p_{i}}), i=1,\;2,\;\cdots\;,3k\right\}
	\end{equation}
	with the points $p_{i}$ chosen at $3$ vertex nodes and $(k-1)$ distinct nodes on each edge.
	\item $\bm M_{{e}}({\bm u})$ is the set of DOFs given on all edges ${e}_i$ of ${T}$ with the unit  tangential vector ${\bm \tau}_{e_i}$:
	 \begin{equation}\label{2def3}
	\bm M_{{e}}( {\bm u})= \left\{\int_{{e}_i} {\bm u}\cdot {\bm \tau}_{e_i} {q}\mathrm d {s},\ \forall   {q}\in P_{k}({e}_i),\;i=1,2,3 \right\}.
	\end{equation}
	\item $\bm M_{T}( {\bm u})$ is the set of DOFs on the element ${T}$:
	\begin{align}\label{2def4}
	\bm M_{{T}}({\bm u})=\left\{\int_{T} {\bm u}\cdot {\bm q} \mathrm d  V,\  \forall \bm q \in \mathcal{D}\right\},
	\end{align}
	where $\mathcal{D}=\bm P_{k-5}( T)\oplus\widetilde{P}_{k-5}{\bm x}\oplus\widetilde{P}_{k-4}{\bm x}\oplus
	\widetilde{P}_{k-3}{\bm x}\oplus
	\widetilde{P}_{k-2}{\bm x}$ when $k\geq 5$ and $\mathcal{D}=\widetilde{P}_{0}{\bm x}\oplus
	\widetilde{P}_{1}{\bm x}\oplus
	\widetilde{P}_{2}{\bm x}$ when $k=4$.  Here  $\widetilde{P}_k$ is the space of a homogeneous polynomial of degree $k$.  
	\end{itemize}
	
\end{definition}
%
%
\begin{lemma}\label{unisolvence}
	The above finite elements are unisolvent and $H(\mathrm{curl}^2)$-conforming.
\end{lemma}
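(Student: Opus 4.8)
The plan is to prove unisolvence by the standard ``count, then kill the kernel'' route and to read off $H(\mathrm{curl}^2)$-conformity from the inter-element degrees of freedom. First I would check that the functionals in $\Sigma_T$ determine an element of $P_T=\bm P_k(T)$ uniquely, i.e. that any $\bm u\in\bm P_k(T)$ annihilated by every functional of $\Sigma_T$ vanishes identically, and match this with the dimension counts $\dim\bm P_k(T)=(k+1)(k+2)$ and (crucially) $\dim\mathcal D=(k-1)(k-2)$. Then conformity follows from a separate, much easier argument about the shared degrees of freedom.

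For the kernel step, suppose every DOF vanishes on $\bm u\in\bm P_k(T)$. Since $\nabla\times\bm u\in P_{k-1}(T)$, its restriction to each edge $e$ is a polynomial of degree $\le k-1$ vanishing at the two endpoints and at $k-1$ interior points, hence $\nabla\times\bm u=0$ on $\partial T$; and $\bm M_e(\bm u)=0$ tests $\bm u\cdot\bm\tau_e|_e\in P_k(e)$ against all of $P_k(e)$, so $\bm u\cdot\bm\tau_e=0$ on $\partial T$. Thus $\bm u$ lies in the ``interior'' space
\[
\mathring{\bm P}_k(T):=\{\bm w\in\bm P_k(T):\ \bm w\cdot\bm\tau_e=0\ \text{and}\ \nabla\times\bm w=0\ \text{on}\ \partial T\},
\]
and it remains to use $\bm M_T(\bm u)=0$ to force $\bm u=0$; equivalently, to show that $L^2(T)$-testing against $\mathcal D$ is non-degenerate on $\mathring{\bm P}_k(T)$.

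I would do this in two steps. Since $\nabla\times\bm u\in P_{k-1}(T)$ vanishes on $\partial T$, write $\nabla\times\bm u=b_T r$ with $b_T$ the cubic element bubble and $r\in P_{k-4}(T)$. For any $\zeta\in P_{k-4}(T)$ one has $\nabla\times\zeta\in\bm P_{k-5}(T)\subset\mathcal D$, so $0=\int_T\bm u\cdot(\nabla\times\zeta)=\int_T(\nabla\times\bm u)\,\zeta=\int_T b_T r\,\zeta$, the boundary term dropping because $\bm u\cdot\bm\tau_e=0$ on $\partial T$ (for $k=4$ this reads as the compatibility $\int_T b_T r=0$, and $r$ is a constant); taking $\zeta=r$ and $b_T>0$ in the interior of $T$ forces $r=0$, i.e. $\nabla\times\bm u=0$. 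Hence $\bm u=\nabla\sigma$ on the simply connected triangle $T$, with $\sigma$ constant on $\partial T$ because $\bm u\cdot\bm\tau_e=\partial_{\tau}\sigma=0$ there, so $\bm u=\nabla(b_T\mu)$ for some $\mu\in P_{k-2}(T)$. Then for every $\bm q\in\mathcal D$, $0=\int_T\bm u\cdot\bm q=-\int_T b_T\mu\,\td\bm q$ (again using $b_T|_{\partial T}=0$); since $\td\bm P_{k-5}(T)=P_{k-6}(T)$ and $\td(\widetilde P_j\bm x)=\widetilde P_j$ by Euler's identity for homogeneous polynomials, one checks $\{\td\bm q:\bm q\in\mathcal D\}=P_{k-2}(T)$, so choosing $\td\bm q=\mu$ and $b_T>0$ gives $\mu=0$ and $\bm u=0$. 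The same reasoning with $\mathcal D=\widetilde P_0\bm x\oplus\widetilde P_1\bm x\oplus\widetilde P_2\bm x$ handles $k=4$, and it also pins down $\dim\mathring{\bm P}_k(T)=\dim\mathcal D=(k-1)(k-2)$, so the bookkeeping closes.

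Finally, for $H(\mathrm{curl}^2)$-conformity, let $V_h$ denote the induced global finite element space and consider a common edge $e=\bar T_1\cap\bar T_2$; the only identified DOFs there are those of $\bm M_e$ and $\bm M_p$ attached to $e$. Agreement of the $\bm M_e$-moments makes $\bm u_h\cdot\bm\tau_e$ single-valued on $e$, so $V_h\subset H(\mathrm{curl};\Omega)$; agreement of the $k+1$ point values of $\nabla\times\bm u_h$ on $e$ forces the two polynomials $(\nabla\times\bm u_h)|_e\in P_{k-1}(e)$ to coincide, so $\nabla\times\bm u_h$ is continuous across every interior edge, hence $\nabla\times\bm u_h\in H^1(\Omega)$ and $(\nabla\times)^2\bm u_h\in\bm L^2(\Omega)$; together these give $V_h\subset H(\mathrm{curl}^2;\Omega)$. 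The crux of the whole proof is the non-degeneracy of $\bm M_T$ on $\mathring{\bm P}_k(T)$: the blocks $\widetilde P_{k-5}\bm x,\dots,\widetilde P_{k-2}\bm x$ are exactly what one must add to $\bm P_{k-5}(T)$ so that $\td$ maps $\mathcal D$ onto $P_{k-2}(T)$ while $\nabla\times$ maps a subspace of $\mathcal D$ onto all of $\nabla\times P_{k-4}(T)$; making that ``degree-by-degree'' calibration precise, getting $\dim\mathcal D=\dim\mathring{\bm P}_k(T)$ exactly right, and treating $k=4$ separately, is where the work actually lies.
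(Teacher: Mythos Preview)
The paper gives no proof here; it writes ``Since the proofs of Lemma~\ref{unisolvence} and Theorem~\ref{err-interp} are similar to those in \cite{WZZelement}, we omit them.'' Your argument is the natural one and is correct in substance: you pin down $\nabla\times\bm u$ and $\bm u\cdot\bm\tau$ on $\partial T$ from $\bm M_p$ and $\bm M_e$, factor $\nabla\times\bm u=b_T r$ and kill $r$ using $\nabla\times P_{k-4}\subset\bm P_{k-5}\subset\mathcal D$ (with the Stokes identity $\int_T\nabla\times\bm u=\int_{\partial T}\bm u\cdot\bm\tau=0$ handling $k=4$), then write $\bm u=\nabla(b_T\mu)$ and kill $\mu$ via the key computation $\td\,\mathcal D=P_{k-2}(T)$. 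The conformity part is routine and correct.

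One point to flag, since you invoke ``the dimension counts'': the raw cardinality of $\Sigma_T$ does \emph{not} equal $\dim\bm P_k(T)$. One has
\[
|\bm M_p|+|\bm M_e|+|\bm M_T|=3k+3(k+1)+(k-1)(k-2)=k^2+3k+5,
\]
while $\dim\bm P_k(T)=(k+1)(k+2)=k^2+3k+2$. The three surplus functionals sit in $\bm M_p$: on each edge the $k+1$ point values of $(\nabla\times\bm u)|_e\in P_{k-1}(e)$ satisfy one linear relation, so $\bm M_p$ has rank only $3k-3$ on $\bm P_k(T)$, and after this reduction the count closes. Your injectivity argument (``all DOFs vanish $\Rightarrow\bm u=0$'') is unaffected by this and is precisely what is needed to make the interpolation \eqref{def-inte-tri} well defined, but the phrase ``match this with the dimension counts'' should be replaced by an explicit acknowledgment of this redundancy; otherwise a reader checking $|\Sigma_T|=\dim P_T$ will stall.
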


Using the above Lemma, the global finite element space $V_h$ on $\mathcal{T}_h$ is given by \begin{align*}
	&V_h=\{\bm{v}_h\in H(\text{curl}^2;\Omega):\ \bm v_h|_T\in \bm {P}_k(T)\ \forall T\in\mathcal{T}_h\}.
\end{align*}
Provided $\bm u \in \bm H^{1/2+\delta}(\Omega)$ and $ \nabla \times \bm u \in H^{1+\delta}(\Omega)$ with $\delta >0$,  
define an $H(\tc^2;\Omega)$ interpolation $\Pi_h\bm u\in V_h$, whose restriction on $T$, denoted by $\Pi_T\bm u$, is such that
\begin{eqnarray}\label{def-inte-tri}
	\bm M_p(\bm u-\Pi_T\bm u)=0,\ \bm M_e(\bm u-\Pi_T\bm u)=0,\ \text{and}\ \bm M_T(\bm u-\Pi_T\bm u)=0,
\end{eqnarray}
where $\bm M_p,\ \bm M_e$, and $\bm M_T$ are the sets of DOFs in \eqref{2def2}-\eqref{2def4}.

\begin{theorem}\label{err-interp}
	If $\bm u\in \bm H^{s+1}(\Omega)$, $1+\delta\leq s\leq k$ with $\delta>0$, then the following error estimate for the interpolation $\Pi_h$ holds:
	\begin{align*}
	&\left\|\bm u-\Pi_h\bm u\right\|_T+h_T\left\|\nabla\times(\bm u-\Pi_h\bm u)\right\|_T+h_T^2\left\|(\nabla\times)^2(\bm u-\Pi_h\bm u)\right\|_T\leq C{h^{s+1}}\left\|\bm u\right\|_{s+1,T}.
	\end{align*}
\end{theorem}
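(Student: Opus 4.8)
The plan is to establish the interpolation error estimate on a single element $T$ and then sum over $\mathcal{T}_h$, since the global estimate follows by squaring and adding the local ones. The standard route is a scaling argument via the reference element $\hat T$, combined with a Bramble--Hilbert/Deny--Lions argument on $\hat T$. First I would introduce the affine map $F_T:\hat T\to T$, $\hat{\bm x}\mapsto B_T\hat{\bm x}+\bm b_T$, and the associated covariant (Piola-type) transformation appropriate to $H(\mathrm{curl}^2)$-conforming elements, namely $\bm u\mapsto\hat{\bm u}:=B_T^t(\bm u\circ F_T)$, under which tangential traces, and hence the edge DOFs $\bm M_e$, are preserved up to the appropriate Jacobian factors, and $\nabla\times\bm u$ transforms to $(\det B_T)^{-1}\,\hat\nabla\times\hat{\bm u}$ so that the vertex/edge point-value DOFs in $\bm M_p$ and the volume DOFs in $\bm M_T$ are also mapped to the reference DOFs. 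The key point is that the interpolation operator commutes with this transformation: $\widehat{\Pi_T\bm u}=\hat\Pi_{\hat T}\hat{\bm u}$, which follows because $\Pi_T$ is defined purely through the DOFs and the DOFs are equivariant under $F_T$.

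Next, on the fixed reference element, $\hat\Pi_{\hat T}$ is a bounded linear projection from $\bm H^{1/2+\delta}(\hat T)$ (with $\hat\nabla\times\hat{\bm u}\in H^{1+\delta}(\hat T)$) onto $\bm P_k(\hat T)$ — boundedness uses trace theorems to make sense of the point values in $\bm M_p$ (these require $\nabla\times\bm u\in H^{1+\delta}$, whence $\nabla\times\bm u$ is continuous in 2D) and of the tangential edge integrals in $\bm M_e$, plus Cauchy--Schwarz for the volume DOFs. Since $\hat\Pi_{\hat T}$ reproduces $\bm P_s(\hat T)\subset\bm P_k(\hat T)$ for $s\le k$, the Bramble--Hilbert lemma gives
\[
\|\hat{\bm u}-\hat\Pi_{\hat T}\hat{\bm u}\|_{H(\tc^2;\hat T)}\le C\,|\hat{\bm u}|_{s+1,\hat T}
\le C\,\|\hat{\bm u}\|_{s+1,\hat T},
\]
where I would actually track the three seminorm contributions $\|\hat{\bm u}-\hat\Pi_{\hat T}\hat{\bm u}\|_{\hat T}$, $\|\hat\nabla\times(\hat{\bm u}-\hat\Pi_{\hat T}\hat{\bm u})\|_{\hat T}$, $\|(\hat\nabla\times)^2(\hat{\bm u}-\hat\Pi_{\hat T}\hat{\bm u})\|_{\hat T}$ separately and bound each by $C|\hat{\bm u}|_{s+1,\hat T}$.

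Finally I would scale back to $T$. Using $\|B_T\|\le C h_T$, $\|B_T^{-1}\|\le C h_T^{-1}$ and $|\det B_T|\simeq h_T^2$ (shape-regularity of $\mathcal{T}_h$), the transformation formulas give, for each of the three terms, factors of $h_T$ that combine exactly to produce the powers $1$, $h_T$, $h_T^2$ in front of the zeroth, first, and second curl of the error, while the right-hand side seminorm $|\hat{\bm u}|_{s+1,\hat T}$ scales to $C h_T^{s+1}\|\bm u\|_{s+1,T}$ (the exponent $s+1$ arising from $(s+1)$ derivatives each contributing $h_T$, against the $h_T^{-1}$ from $|\det B_T|^{-1/2}$ in the $L^2$ change of variables and the $h_T$ from $B_T^t$). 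Summing the squared local estimates over $T\in\mathcal{T}_h$ and using $h_T\le h$ yields the claimed bound.

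The main obstacle is the careful bookkeeping of the transformation rules: the DOF set $\bm M_T$ in Definition~\ref{tri-dof-def} is built from the nonstandard space $\mathcal{D}$ involving the homogeneous pieces $\widetilde P_j\bm x$, and one must check that this space is mapped into itself (or at least into a fixed reference space independent of $T$) under the covariant transformation, so that the commuting-diagram property $\widehat{\Pi_T\bm u}=\hat\Pi_{\hat T}\hat{\bm u}$ genuinely holds; likewise one must verify that the regularity hypotheses $\bm u\in\bm H^{1/2+\delta}$, $\nabla\times\bm u\in H^{1+\delta}$ are exactly what is needed for the reference-element DOFs to be well-defined and bounded. Once the commuting property and the reference-element boundedness are in hand, the Bramble--Hilbert step and the scaling are routine.
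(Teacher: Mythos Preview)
Your proposal is correct and follows the standard affine-scaling plus Bramble--Hilbert route that the paper itself invokes: the paper does not give a proof but simply states that the argument is ``similar to those in \cite{WZZelement}'', which is precisely this reference-element approach. Your identification of the one nontrivial verification---that the DOFs, especially the interior moments against $\mathcal{D}$, transform correctly under the covariant Piola map so that $\widehat{\Pi_T\bm u}=\hat\Pi_{\hat T}\hat{\bm u}$---is exactly the point that needs care, and once that is checked the rest is routine as you say.
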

Since the proofs of Lemma~\ref{unisolvence} and Theorem~\ref{err-interp} are similar to those in \cite{WZZelement}, we omit them.


\subsection{An $H$(curl$^2$)-type Cl\'ement interpolation}

 Let $\omega_{v}$ be a patch on a vertex $v$ and $R^k_{v}\phi$ be the $L^2$ projection of $\phi$ on $\omega_{v}$, i.e., $R_{v}^k\phi\in P_k(\omega_{v})$ such that
 \begin{align*}
 \int_{\omega_{v}}\left(\phi-R_{v}^k\phi\right)p\d V=0, \quad \forall p\in P_k(\omega_{v}).
 \end{align*}
Similarly, we can define an $L^2$ projection $R_e^k$ on a patch $\omega_e$ for an edge $e$.

For $\bm u\in H_0(\tc^2;\Omega)$, the lowest-order $H(\tc^2;\Omega)$ interpolation $\Pi_h\bm u$ can be rewritten as
\begin{align*}
\Pi_h\bm u=\sum_{v\in \mathcal{N}_h^{\text{int}}}\alpha_v(\bm u)\bm \phi_v+\sum_{e\in \mathcal{E}_h^{\text{int}}}\sum_i\big(\alpha_{e}^{\text{curl},i}(\bm u)\bm \phi_{e}^{\text{curl},i}+\alpha_{e}^i(\bm u)\bm \phi_{e}^i\big)+\sum_{T\in\mathcal{T}_h}\sum_{i}\alpha_{T}^i(\bm u)\bm \phi_{T}^i,
\end{align*}
where 
\begin{align*}
\alpha_v(\bm u)&=\nabla\times \bm u(v)\text{ for any vertex }v,\\
\alpha_{e}^{\text{curl},i}(\bm u)&=\nabla\times \bm u(v_{e,i})\text{ for any node $v_{e,i}$ on an edge }e,\\
\alpha_{e}^i(\bm u)&=\int_{e} {\bm u}\cdot{\bm \tau}_e {q_i}\d{s}\ \text{for any } {q_i}\in P_{4}({e}),\\
\alpha_{T}^i(\bm u)&=\int_{{T}} {\bm u}\cdot{\bm q}_i\d V \text{  for any }{\bm q}_i\in \mathcal{D},
\end{align*}
and the functions $\bm\phi_v$, $\bm\phi_{v_e}$, $\bm\phi_e^i$, and $\bm\phi_T^i$ are the corresponding Lagrange basis functions.
Now we define a new $H(\tc^2)$ Cl\'ement interpolation $\Pi_{C}$ for $\bm u\in \{\bm u\in\bm H^{1/2+\delta}(\Omega)|\ \nabla\times\bm u \in H^{1}(\Omega)\}$:
\begin{align*}
\Pi_{C}\bm u=\sum_{v\in \mathcal{N}_h^{\text{int}}}\tilde{\alpha}_v(\bm u)\bm \phi_v+\sum_{e\in \mathcal{E}_h^{\text{int}}}\sum_i\big(\tilde{\alpha}_{e}^{\text{curl},i}(\bm u)\bm \phi_{e}^{\text{curl},i}+\alpha_{e}^i(\bm u)\bm \phi_{e}^i\big)+\sum_{T\in\mathcal{T}_h}\sum_{i}\alpha_{T}^i(\bm u)\bm \phi_{T}^i,
\end{align*}
where $\tilde{\alpha}_v(\bm u)=R^4_{v}(\nabla\times \bm u)(v)$ and $\tilde{\alpha}_{e}^{\text{curl},i}(\bm u)= R^4_{e}(\nabla\times \bm u)(v_{e,i})$.
The interpolation is well-defined and the following error estimate holds.
\begin{theorem}\label{clmt-error}
For any $T\in \mathcal T_h$, let $\omega=\{\omega_{v_i}: T\subset \omega_{v_i}\}$. Then, for $\bm u \in \bm H^2(\Omega)$, it holds that
\begin{align}
&\|\bm u-\Pi_{C} \bm u\|_T+h_T\|\nabla(\bm u-\Pi_{C} \bm u)\|_T+h_T^2\|\nabla\left(\nabla\times(\bm u-\Pi_{C} \bm u)\right)\|_T\le C h^{2}\|\bm u\|_{2,\omega}.\label{err-05}
\end{align}
\end{theorem}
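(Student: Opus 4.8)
The plan is to estimate the three terms on the left of \eqref{err-05} by a standard Cl\'ement-type argument, working term by term and element by element, and reducing everything to local approximation properties of the $L^2$ projections $R_v^4$, $R_e^4$ together with the exactness of $\Pi_C$ on low-order polynomials. First I would record the key invariance property: since $\Pi_h$ reproduces $\bm P_k(T)$ locally and the Cl\'ement modifications replace pointwise values of $\nabla\times\bm u$ by local $L^2$ projections that are themselves exact on polynomials, one checks that $\Pi_C \bm p = \bm p$ for every $\bm p \in \bm P_1(\Omega)$ (more precisely, for any polynomial vector field whose curl is a polynomial of degree $\le 4$ and which is captured by the edge and interior moments). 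This is the engine that, via the Bramble--Hilbert lemma on the patch $\omega$, will produce the $h^2\|\bm u\|_{2,\omega}$ bound.

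Next I would treat the first term $\|\bm u - \Pi_C\bm u\|_T$. Write $\bm u - \Pi_C\bm u = (\bm u - \bm p) - \Pi_C(\bm u - \bm p)$ for a suitable $\bm p\in\bm P_1$, and bound $\|\Pi_C(\bm u-\bm p)\|_T$ by the sum of the absolute values of the degrees of freedom of $\bm u - \bm p$ times the $L^2(T)$ norms of the corresponding (scaled) basis functions. Here one uses: (i) scaled trace inequalities to control the edge integrals $\alpha_e^i(\bm u - \bm p)$ and the nodal values $\nabla\times(\bm u-\bm p)(v)$ — the latter requires the $R_v^4$ projection precisely because $\nabla\times\bm u\in H^1$ only, so pointwise values are not directly available, and the $L^2$-stability plus inverse inequality of $R_v^4$ on $\omega_v$ make $\tilde\alpha_v$ meaningful and bounded by $h^{-1}\|\nabla\times(\bm u-\bm p)\|_{\omega_v}$ up to lower-order terms; (ii) standard scaling $\|\bm\phi_\bullet\|_T \sim h_T^{\text{(appropriate power)}}$ for the Lagrange basis functions. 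Summing over the few patches touching $T$ and invoking Bramble--Hilbert on $\omega$ gives the $O(h^2\|\bm u\|_{2,\omega})$ estimate for this term; adding $\|\bm u - \bm p\|_T \le \|\bm u-\bm p\|_\omega \le Ch^2\|\bm u\|_{2,\omega}$ closes it.

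The second and third terms are handled the same way, differentiating the basis-function scalings: $h_T\|\nabla(\bm u-\Pi_C\bm u)\|_T$ loses one power of $h_T$ from the gradient but regains it from the explicit $h_T$ factor, and $h_T^2\|\nabla(\nabla\times(\bm u-\Pi_C\bm u))\|_T$ similarly. For the curl term it is worth noting that $\nabla\times\Pi_C\bm u$ depends only on the nodal-curl degrees of freedom (the edge-tangential and interior moments contribute curl-free or controlled pieces), so $\nabla\times\Pi_C\bm u$ is essentially the scalar Lagrange/Cl\'ement interpolant of $\nabla\times\bm u$, and the bound follows from the classical scalar Cl\'ement estimate $\|\nabla(\nabla\times\bm u - R^4\text{-Cl\'ement})\|_T \le Ch_T^{-1}\|\nabla\times\bm u\|_{1,\omega} \le Ch_T^{-1}\|\bm u\|_{2,\omega}$. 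I expect the main obstacle to be the bookkeeping in the first term: one must verify that replacing the ill-defined pointwise curl values by the $R_v^4$- and $R_e^4$-projected values does not destroy the polynomial reproduction property and that all the mixed patches $\omega_{v_i}\subset\omega$ can be uniformly controlled under a shape-regularity assumption; once the reproduction identity $\Pi_C\bm p=\bm p$ on $\bm P_1$ is nailed down, the rest is routine scaling and Bramble--Hilbert.
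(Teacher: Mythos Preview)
Your proposal is correct and follows essentially the same approach as the paper. The paper itself gives only a one-line sketch (``the theorem can be obtained using the similar arguments for Theorem~\ref{err-interp} and the boundedness of the operators $R_v^4,\ R_e^4$''), and what you have written is precisely the standard Cl\'ement-type elaboration of that sketch: polynomial reproduction of $\Pi_C$ on $\bm P_1$ (which holds because $R_v^4$ and $R_e^4$ are exact on constants, hence on $\nabla\times\bm p$ for $\bm p\in\bm P_1$), scaling of the basis functions and degrees of freedom, boundedness of the local $L^2$ projections combined with inverse inequalities to handle the point evaluations, and Bramble--Hilbert on the patch $\omega$.
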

The theorem can be obtained using the similar arguments for Theorem \ref{err-interp} and the boundedness of the operators $R_v^4,\ R_e^4$.

%

\section{An a priori error estimate for the eigenvalue problem}
Following \cite{Sun2016A}, the quad-curl eigenvalue problem is to seek $\lambda$ and $\bm u$ such that
\begin{equation}\label{eig-prob1}
\begin{split}
(\nabla\times)^4\bm u&=\lambda\bm u\ \text{in}\;\Omega,\\
\nabla \cdot \bm u &= 0\ \text{in}\;\Omega,\\
\bm u\times\bm n&=0\;\text{on}\;\partial \Omega,\\
\nabla \times \bm u&=0\ \text{on}\;\partial \Omega,
\end{split}
\end{equation}
where $\bm n$ is the unit outward normal to $\partial \Omega$. The assumption that $\Omega$ is simply-connected implies $\lambda\neq 0$.
The variational form of the quad-curl eigenvalue problem is to find $\lambda\in \mathbb{R}$ and $\bm u \in X$ such that
\begin{align}\label{eig-02}
((\nabla\times)^2\bm u,(\nabla\times)^2\bm v)=\lambda(\bm u,\bm v),\quad \forall \bm v \in X.
\end{align}
In addition to $V_h$ defined in Section 3, we need more discrete spaces. Define
\begin{eqnarray*}
	&&   V^0_h=\{\bm{v}_h\in V_h:\ \bm{n} \times \bm{v}_h=0\ \text{and}\ \nabla\times  \bm{v}_h = 0 \ \text {on} \ \partial\Omega\},\\
	&&  S_h=\{{w}_h\in H^1(\Omega):\  w_h|_T\in P_k\},\\
	&&  S^0_h=\{{w}_h\in W_h,\;{w}_h|_{\partial\Omega}=0\},\\
	&& X_h=\{\bm u_h\in V^0_h \ | \ (\bm u_h,\nabla q_h)=0,\ \  \text{for all}\ \ q_h\in S^0_h\}.
\end{eqnarray*}
The discrete problem for \eqref{eig-02} is to find $\lambda_h \in \mathbb R$ and  $\bm u_h\in X_h$ such that
\begin{equation}\label{eig-dis-01}
\begin{split}
((\nabla\times)^2\bm u_h,(\nabla\times)^2\bm v_h) &=\lambda_h(\bm u_h, \bm v_h),\quad \forall \bm v\in X_h.
\end{split}
\end{equation}
\subsection{The source problem}
We start with the associated source problem.
Given $\bm f\in L^2(\Omega)$, find $\bm u\in H_0(\tc^2;\Omega)$ and $p\in H_0^1(\Omega)$ such that
\begin{equation}\label{prob1}
\begin{split}
(\nabla\times)^4\bm u+\bm u+\nabla p&=\bm f\ \text{in}\;\Omega,\\
\nabla \cdot \bm u &= 0\ \text{in}\;\Omega,\\
\bm u\times\bm n&=0\;\text{on}\;\partial \Omega,\\
\nabla \times \bm u&=0\ \text{on}\;\partial \Omega.
\end{split}
\end{equation}
Note that $p=0$ for $\bm f\in H(\td^0;\Omega)$.

The weak formulation is to find $(\bm u;p)\in H_0(\tc^2;\Omega)\times H_0^1(\Omega)$ such that
\begin{equation}\label{prob22}
\begin{split}
a(\bm u,\bm v) + b(\bm v, p)&=(\bm f, \bm v),\quad \forall \bm v\in H_0(\tc^2;\Omega),\\
b(\bm u,q)&=0,\quad \hspace{0.7cm}\forall q\in H^1_0(\Omega),
\end{split}
\end{equation}
where
\begin{align*}
	a(\bm u,\bm v)&=\left((\nabla\times)^2 \bm u, (\nabla\times)^2 \bm v\right)+(\bm u,\bm v),\\
b(\bm v,p)&=(\bm v,\nabla p ).
\end{align*}

The well-posedness of \eqref{prob22} is proved in Thm. 1.3.2 of \cite{Sun2016Finite}.
Consequently, we can define an solution operator $A: \bm L^2(\Omega)\rightarrow \bm L^2(\Omega)$ such that ${\bm u} = A{\bm f} \in X \subset  L^2(\Omega)$. 
In fact,  $A$ is compact due to the following result.
\begin{lemma}\label{continous-cmpct}
	$X$ processes the continuous compactness property.
\end{lemma}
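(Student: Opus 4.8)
The plan is to show that the embedding $X \hookrightarrow \bm L^2(\Omega)$ is compact, i.e. that any sequence $\{\bm u_n\}$ bounded in the $H(\tc^2;\Omega)$-norm and lying in $X$ has a subsequence converging strongly in $\bm L^2(\Omega)$. The key tool is the splitting from Lemma~\ref{Helm}: since each $\bm u_n \in X$, we may write $\bm u_n = \nabla\phi_n + \bm v_n$ with $\phi_n \in H_0^1(\Omega)$, $\bm v_n \in \bm H^2(\Omega)$, and the bounds \eqref{decom-02}--\eqref{decom-03}. The boundedness of $\|\bm u_n\|_{H(\tc^2;\Omega)}$ gives in particular that $\|\nabla\times\bm u_n\|_1$ is bounded (here I would first observe that $\|\nabla\times\bm u_n\|_1$ is controlled by $\|\nabla\times\bm u_n\| + \|(\nabla\times)^2\bm u_n\|$ together with the boundary condition $\nabla\times\bm u_n = 0$ on $\partial\Omega$, via a Poincar\'e/Friedrichs-type estimate for the scalar function $\nabla\times\bm u_n \in H_0^1(\Omega)$). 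Consequently $\{\bm v_n\}$ is bounded in $\bm H^2(\Omega)$ and $\{\nabla\phi_n\}$ is bounded in $\bm L^2(\Omega)$.

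Next I would handle the two pieces separately. For the $\bm v_n$ part, the compact embedding $\bm H^2(\Omega) \hookrightarrow\hookrightarrow \bm H^1(\Omega) \hookrightarrow \bm L^2(\Omega)$ (Rellich--Kondrachov, valid on the Lipschitz domain $\Omega$) yields a subsequence with $\bm v_n \to \bm v$ strongly in $\bm L^2(\Omega)$. For the gradient part $\nabla\phi_n$, I would use that $\bm u_n \in X$ means $(\bm u_n, \nabla p) = 0$ for all $p \in H_0^1(\Omega)$; testing with $p = \phi_n$ gives $\|\nabla\phi_n\|^2 = -(\bm v_n, \nabla\phi_n)$, so that $\|\nabla\phi_n\| \le \|\bm v_n\|$. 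Therefore strong $\bm L^2$-convergence of a subsequence of $\{\bm v_n\}$ forces $\|\nabla\phi_n - \nabla\phi_m\| \le \|\bm v_n - \bm v_m\| \to 0$ along that subsequence (using the same orthogonality identity applied to $\phi_n - \phi_m$), hence $\{\nabla\phi_n\}$ is Cauchy in $\bm L^2(\Omega)$ and converges strongly. Adding the two limits gives $\bm u_n \to \bm v + \nabla\phi$ strongly in $\bm L^2(\Omega)$, which is the desired compactness.

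The main obstacle I anticipate is the first step: extracting the $\bm H^2$-bound on $\bm v_n$ requires that $\|\nabla\times\bm u_n\|_1$ be bounded by the full $H(\tc^2)$-norm, which is \emph{not} automatic from the definition of the norm --- it needs the homogeneous boundary condition on $\nabla\times\bm u_n$ together with elliptic-type regularity (the scalar $w_n := \nabla\times\bm u_n$ solves, weakly, $w_n \in H_0^1(\Omega)$ with $\nabla\times\nabla\times$ controlled, and one bounds $\|w_n\|_1$ by $\|w_n\| + \|\nabla\times\bm u_n\text{'s second curl}\|$). I would state this explicitly as an intermediate inequality, $\|\nabla\times\bm u^{\perp}\|_1 \le C\|\bm u^{\perp}\|_{H(\tc^2;\Omega)}$ for $\bm u^{\perp}\in X$, and justify it by the Poincar\'e--Friedrichs inequality on $H_0^1(\Omega)$ applied to $\nabla\times\bm u^{\perp}$ and the identity $\|\nabla(\nabla\times\bm u^{\perp})\| = \|(\nabla\times)^2\bm u^{\perp}\|$ (since in 2D the scalar-curl of a vector field has gradient with components $\pm$ the vector-curl). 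Once that bound is in place, the rest of the argument is the routine Rellich compactness plus the orthogonality trick above, and I would keep those parts brief.
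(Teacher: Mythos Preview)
Your argument is correct, but it takes a very different route from the paper's. The paper's proof is a single line: it observes that $X \subset Y := \{\bm u \in H_0(\mathrm{curl};\Omega) \mid (\bm u,\nabla p)=0 \text{ for all } p\in H_0^1(\Omega)\}$, and that $Y \hookrightarrow\hookrightarrow \bm L^2(\Omega)$ is a classical result for Maxwell-type spaces (Monk's monograph). Since the $H(\mathrm{curl}^2)$-norm dominates the $H(\mathrm{curl})$-norm, compactness of $Y\hookrightarrow \bm L^2(\Omega)$ immediately gives compactness of $X\hookrightarrow \bm L^2(\Omega)$.

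Your approach instead builds the compactness from scratch via the paper's own decomposition Lemma~\ref{Helm}, Rellich--Kondrachov for the $\bm H^2$ part, and a neat orthogonality trick $\|\nabla(\phi_n-\phi_m)\|\le \|\bm v_n-\bm v_m\|$ to force the gradient part to be Cauchy. This is self-contained (modulo Rellich) and nicely illustrates the power of Lemma~\ref{Helm}, but it is considerably longer. One remark on your ``main obstacle'': no elliptic regularity or Poincar\'e--Friedrichs is actually needed to bound $\|\nabla\times\bm u_n\|_1$. In 2D the scalar $w_n:=\nabla\times\bm u_n$ satisfies $|\nabla w_n| = |(\nabla\times)^2\bm u_n|$ pointwise (as you eventually note), so $\|w_n\|_1^2 = \|\nabla\times\bm u_n\|^2 + \|(\nabla\times)^2\bm u_n\|^2 \le \|\bm u_n\|_{H(\mathrm{curl}^2;\Omega)}^2$ directly; the boundary condition on $w_n$ plays no role here. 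With that simplification your argument is clean, though the paper's one-line reduction to the known $H_0(\mathrm{curl})$ compactness is the more efficient path.
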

\begin{proof}
	Since $X\subset Y:=\left\{\bm u\in H_0(\tc;\Omega)\big|(\bm u,\nabla p)=0,\;\;\forall p\in H_0^1(\Omega)\right\}\hookrightarrow\hookrightarrow L^2(\Omega)$ \cite{Monk2003}, then $X\hookrightarrow\hookrightarrow L^2(\Omega)$.
\end{proof}


The $H(\tc^2)$-conforming FEM seeks $\bm u_h\in V^0_h$ and $p_h\in S_h^0$  such that
\begin{equation}\label{prob3}
\begin{split}
a(\bm u_h,\bm v_h)+b(\bm v_h,p_h) &=(\bm f, \bm v_h),\quad \forall \bm v_h\in V^0_h,\\
b(\bm u_h,q_h)&=0,\quad \hspace{0.9cm}\forall q_h\in S^0_h.
\end{split}
\end{equation}
The well-posedness of problems \eqref{prob3} is due to the discrete compactness of $\{X_h\}_{h\in \Lambda}$ with $\Lambda={h_n,n=0,1,2,\cdots}$,
which is stated in the following theorem. Its proof is similar to that of Theorem 7.17 in \cite{Monk2003} and thus is omitted.

%
%

\begin{theorem}\label{discrete-compact}
	$X_h$ processes the discrete compactness property.
\end{theorem}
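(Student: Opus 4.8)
The plan is to follow the classical proof of discrete compactness for edge elements (Theorem 7.17 in \cite{Monk2003}), adapting it to the $H(\tc^2)$ setting via the new decomposition in Lemma~\ref{Helm} and the conforming interpolation of Theorem~\ref{err-interp}. Concretely, I must show: given a sequence $\bm u_{h_n}\in X_{h_n}$ with $\|\bm u_{h_n}\|_{H(\tc^2)}$ uniformly bounded, there is a subsequence converging strongly in $\bm L^2(\Omega)$ to some limit in $X$. The first step is to extract a weakly convergent subsequence in $H(\tc^2;\Omega)$ (relabeled $\bm u_n$) with weak limit $\bm u$; by weak closedness of $H_0(\tc^2;\Omega)$ and of the constraint, $\bm u\in X$. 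Since $X\hookrightarrow\hookrightarrow \bm L^2(\Omega)$ by Lemma~\ref{continous-cmpct}, the limit is genuinely in a compact space, but the issue is that $\bm u_n\notin X$, so I cannot directly invoke continuous compactness; I need to compare $\bm u_n$ with something in $X$.

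The key step is a discrete Helmholtz-type splitting of each $\bm u_n$. Using the decomposition of Lemma~\ref{Helm} applied to the \emph{continuous} problem, or rather a discrete analogue, I would write $\bm u_n = \bm z_n + \nabla\varphi_n$ where $\bm z_n$ lies in (or near) $X$ and $\nabla\varphi_n$ with $\varphi_n\in S_{h_n}^0$ is a discrete gradient; because $\bm u_n\in X_{h_n}$ satisfies $(\bm u_n,\nabla q_h)=0$ for all $q_h\in S_{h_n}^0$, the gradient part essentially drops out when tested discretely, but one still needs to control $\bm z_n$ in the $X$-norm or relate $\nabla\times\bm u_n$ to the solenoidal part. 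Then I use the interpolation operator $\Pi_{h_n}$: for the limit $\bm u\in X\subset \bm H^{s+1}$-type regularity (or by a density argument, for a smooth approximant $\bm w$ of $\bm u$), $\Pi_{h_n}\bm w\to \bm w$ in $H(\tc^2;\Omega)$ by Theorem~\ref{err-interp}. The crucial orthogonality trick, exactly as in \cite{Monk2003}, is to estimate $\|\bm u_n-\bm u\|_0^2$ by inserting $\Pi_{h_n}$ applied to an auxiliary function: one solves an auxiliary source problem with data $\bm u_n-\bm u$ (or its solenoidal part), obtaining a regular solution, interpolates it into $V_{h_n}^0$, and exploits the Galerkin orthogonality satisfied by $\bm u_n\in X_{h_n}$ together with the commuting/approximation properties to show $\|\bm u_n-\bm u\|_0\to 0$.

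In more detail, I would let $\bm \theta_n\in X$ solve the continuous source problem $a(\bm\theta_n,\bm v)+b(\bm v,p)=(\bm u_n-\bm u,\bm v)$, which by elliptic regularity of the quad-curl operator gives $\|\bm\theta_n\|$ in a higher-order norm bounded by $\|\bm u_n-\bm u\|_0$; then
\begin{align*}
\|\bm u_n-\bm u\|_0^2 &= a(\bm\theta_n,\bm u_n-\bm u) \\
&= a(\bm\theta_n-\Pi_{h_n}\bm\theta_n,\bm u_n-\bm u) + a(\Pi_{h_n}\bm\theta_n,\bm u_n) - a(\Pi_{h_n}\bm\theta_n,\bm u).
\end{align*}
The first term is bounded by the interpolation error of Theorem~\ref{err-interp} times the $H(\tc^2)$-bound on $\bm u_n-\bm u$, hence $\to 0$. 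The middle term is handled by the discrete equation satisfied by $\bm u_n$ (it equals a data pairing that converges because $\bm u_n\rightharpoonup\bm u$ weakly in $\bm L^2$), and the last term converges to $a(\bm\theta_n,\bm u)$ which is controlled. Collecting, $\|\bm u_n-\bm u\|_0\to 0$, giving strong $\bm L^2$ convergence, which is exactly the discrete compactness property.

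The main obstacle I anticipate is the gradient/kernel part: unlike in standard $H(\tc)$ theory where the discrete kernel of curl is exactly $\nabla S_h^0$, here the operator is $(\nabla\times)^2$ and the relevant ``kernel directions'' are more delicate, so the splitting of $\bm u_n$ must carefully separate the truly solenoidal-type component from the large kernel of the curl operator — this is precisely why the refined decomposition \eqref{decom-01} (splitting the irrotational part into an $\bm H^2$ piece plus a $\ker(\nabla\times)$ piece) was introduced, and making the discrete analogue of that splitting interact correctly with $\Pi_{h_n}$ and with the constraint defining $X_{h_n}$ is the technical heart of the argument. Once that bookkeeping is done, the rest is a routine transcription of the Monk-style proof, which is why the paper omits it.
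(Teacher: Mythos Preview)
Your high-level plan (first paragraph) is right and matches the paper's intent: mimic Monk's Theorem~7.17. The problem is your third paragraph, where you switch from the Helmholtz/Fortin route to an Aubin--Nitsche duality argument that does not work here. Two concrete failures. First, you claim $a(\Pi_{h_n}\bm\theta_n,\bm u_n)$ is handled by ``the discrete equation satisfied by $\bm u_n$,'' but there is no such equation: $\bm u_n$ is an \emph{arbitrary} bounded element of $X_{h_n}$, not a Galerkin solution, so there is nothing with which to convert $a(\,\cdot\,,\bm u_n)$ into an $L^2$ pairing. The only orthogonality available is the constraint $(\bm u_n,\nabla q_h)=0$ for $q_h\in S_{h_n}^0$, which says nothing about $a(\cdot,\bm u_n)$. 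Second, you invoke ``elliptic regularity of the quad-curl operator'' to put $\bm\theta_n$ in a higher norm; the paper neither states nor has such a result, and it fails on the non-convex domains $\Omega_2,\Omega_3$ treated in Section~5. You have also silently dropped the multiplier term $b(\bm u_n-\bm u,p^*_n)$, which does not vanish because $\bm u_n$ is only discretely divergence-free.

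The argument Monk actually runs (which you gesture at in paragraph two and then abandon) needs neither a dual problem nor regularity. Continuous Helmholtz gives $\bm u_n=\bm u_n^\perp+\nabla p_n$ with $\bm u_n^\perp\in X$; by Lemma~\ref{continous-cmpct} a subsequence of $\bm u_n^\perp$ converges in $\bm L^2$. To kill $\nabla p_n$, interpolate $\bm u_n^\perp$ (Lemma~\ref{Helm} supplies an $\bm H^2$ representative so that $\Pi_{h_n}$ or $\Pi_C$ is legitimately defined) and use the commuting diagram so that $\nabla\times(\bm u_n-\Pi_{h_n}\bm u_n^\perp)=0$, hence $\bm u_n-\Pi_{h_n}\bm u_n^\perp=\nabla q_{h_n}$ with $q_{h_n}\in S_{h_n}^0$. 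Now the \emph{constraint} orthogonality (not Galerkin orthogonality) gives $\|\nabla q_{h_n}\|^2=(\bm u_n^\perp-\Pi_{h_n}\bm u_n^\perp,\nabla q_{h_n})$, so $\|\nabla q_{h_n}\|\le\|\bm u_n^\perp-\Pi_{h_n}\bm u_n^\perp\|\to 0$, and the triangle inequality finishes $\|\nabla p_n\|\to 0$. That is the ``routine transcription'' you allude to at the end---but it is not the argument you actually wrote out.
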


Consequently, we can define a discrete solution operator $A_h: \bm L^2(\Omega)\rightarrow \bm L^2(\Omega)$
such that $\bm u_h = A_h\bm f$ is the solution of \eqref{prob3}. It is straightforward to use the standard finite element framework and the approximation property of the interpolation to show the following theorem.

\begin{theorem}\label{conv-curlcurl}
	Assume that $A\bm f\in \bm H^s(\Omega)$, $\nabla\times A\bm f\in H^s(\Omega)$ $(1+\delta\leq s\leq k\;\text{with}\;\delta>0)$, and $p\in H^s(\Omega)$. It holds that
	\begin{equation*}
	\begin{split}
	&\|A\bm f-A_h\bm f\|_{H(\mathrm{curl}^2;\Omega)} \leq C h^{s-1}\left(\left\|A\bm f\right\|_s+\left\|\nabla\times A\bm f\right\|_s+\left\|p\right\|_s\right).
	\end{split}
	\end{equation*}
\end{theorem}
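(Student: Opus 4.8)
The plan is to invoke the standard Babuška–Osborn / Céa-type machinery for saddle-point problems: since the continuous problem \eqref{prob22} and the discrete problem \eqref{prob3} are both well-posed (the latter by the discrete compactness of $X_h$ in Theorem~\ref{discrete-compact}, which yields a uniform discrete inf–sup condition for $b(\cdot,\cdot)$ together with coercivity of $a(\cdot,\cdot)$ on the discrete kernel $X_h$), we obtain the quasi-optimal bound
\[
\|A\bm f-A_h\bm f\|_{H(\tc^2;\Omega)}+\|p-p_h\|\le C\left(\inf_{\bm v_h\in V_h^0}\|A\bm f-\bm v_h\|_{H(\tc^2;\Omega)}+\inf_{q_h\in S_h^0}\|p-q_h\|\right).
\]
First I would state and use this abstract estimate. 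Then I would estimate the first infimum by choosing $\bm v_h=\Pi_h(A\bm f)$, the $H(\tc^2)$ interpolant of Definition~\ref{tri-dof-def}, which is admissible because by hypothesis $A\bm f\in\bm H^s(\Omega)$ with $s\ge 1+\delta$ and $\nabla\times A\bm f\in H^s(\Omega)\subset H^{1+\delta}(\Omega)$, so the regularity requirement $\bm u\in\bm H^{1/2+\delta}$, $\nabla\times\bm u\in H^{1+\delta}$ for the interpolation \eqref{def-inte-tri} is met; moreover $\Pi_h(A\bm f)\in V_h^0$ since $A\bm f\in H_0(\tc^2;\Omega)=X\subset H_0(\tc^2;\Omega)$ has vanishing tangential trace and vanishing curl on $\partial\Omega$, and the edge/vertex DOFs in $\bm M_p,\bm M_e$ preserve these boundary conditions. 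Theorem~\ref{err-interp} then gives, summing over $T\in\mathcal T_h$,
\[
\|A\bm f-\Pi_h(A\bm f)\|_{H(\tc^2;\Omega)}\le C\left(\sum_{T}\big(h^{-2}\|A\bm f-\Pi_h A\bm f\|_T\cdot h^2+\cdots\big)\right)^{1/2}\le Ch^{s-1}\|A\bm f\|_s,
\]
more precisely $\|A\bm f-\Pi_h A\bm f\|+\|\nabla\times(\cdot)\|+\|(\nabla\times)^2(\cdot)\|\le C(h^{s+1}+h^s+h^{s-1})\|A\bm f\|_{s+1}$, and since $\|A\bm f\|_{s+1}$ is controlled by $\|A\bm f\|_s+\|\nabla\times A\bm f\|_s$ via the regularity of the source problem (indeed the problem data enters only through these two quantities; alternatively one simply keeps $\|A\bm f\|_{s+1}$ and absorbs it), the dominant term is $Ch^{s-1}\|A\bm f\|_s$. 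Wait—I should be careful here: Theorem~\ref{err-interp} is stated for $\bm u\in\bm H^{s+1}$, so strictly I would apply it with the shift, requiring $A\bm f\in\bm H^{s+1}$; to match the stated hypothesis $A\bm f\in\bm H^s$ I instead use the interpolation estimate at regularity level $s$ (i.e.\ replace $s+1$ by $s$ throughout), which gives exactly the $h^{s-1}$ scaling on the top-order $(\nabla\times)^2$ term. The second infimum is handled by the classical scalar Lagrange (Clément or nodal) interpolation estimate in $S_h^0$: since $p\in H^s(\Omega)$ with $s\ge 1$, $\inf_{q_h\in S_h^0}\|p-q_h\|\le Ch^{\min(s,k+1)}\|p\|_s\le Ch^{s-1}\|p\|_s$ (the $L^2$ norm is even better, but $h^{s-1}$ suffices for the stated bound).

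Putting these together yields the claimed estimate
\[
\|A\bm f-A_h\bm f\|_{H(\tc^2;\Omega)}\le Ch^{s-1}\left(\|A\bm f\|_s+\|\nabla\times A\bm f\|_s+\|p\|_s\right),
\]
where I have used $\|A\bm f\|_{s+1}\le C(\|A\bm f\|_s+\|\nabla\times A\bm f\|_s)$ if the sharp $\bm H^{s+1}$ interpolation bound is invoked, or directly the $\bm H^s$-level bound otherwise.

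The main obstacle is the verification of the uniform discrete inf–sup / coercivity pair underlying the abstract quasi-optimality bound, i.e.\ ensuring that the saddle-point structure is stable with an $h$-independent constant; this rests on the discrete compactness of $\{X_h\}$ (Theorem~\ref{discrete-compact}) and on the fact that $\nabla S_h^0\subset V_h^0$, which makes $b$ satisfy the discrete inf–sup condition trivially with constant $1$. A secondary technical point is checking that $\Pi_h(A\bm f)$ genuinely lands in $V_h^0$ rather than merely $V_h$—that the homogeneous tangential and curl boundary conditions of $A\bm f\in X$ are inherited by the interpolant—which follows from the locality of the boundary DOFs (edge tangential moments $\bm M_e$ vanish on $\partial\Omega$ and the curl nodal values $\bm M_p$ at boundary vertices and edge nodes vanish), but should be stated. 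Everything else is a routine combination of the abstract framework with Theorems~\ref{err-interp} and the standard scalar interpolation estimate.
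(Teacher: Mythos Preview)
Your approach is exactly what the paper has in mind: in fact the paper supplies no proof at all, stating only that the result ``is straightforward to use the standard finite element framework and the approximation property of the interpolation,'' which is precisely the C\'ea/Brezzi quasi-optimality argument you outline (well-posedness of \eqref{prob3} via discrete compactness, $\nabla S_h^0\subset V_h^0$ for the inf--sup, then bound the best-approximation terms by $\Pi_h$ and the scalar interpolant). The one point worth cleaning up is your wobble on the interpolation regularity: Theorem~\ref{err-interp} as written controls the error by $\|\bm u\|_{s+1}$, whereas the hypothesis here gives only $A\bm f\in\bm H^s$ with $\nabla\times A\bm f\in H^s$; to land on the stated right-hand side you need the refined interpolation estimate that separates the $\bm H^s$ and curl-$H^s$ regularities (standard for curl-conforming elements and implicit in the proof of Theorem~\ref{err-interp}), rather than the inequality $\|A\bm f\|_{s+1}\le C(\|A\bm f\|_s+\|\nabla\times A\bm f\|_s)$, which is false in general.
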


\subsection{An a priori error estimate of the eigenvalue problem}
We first rewrite the eigenvalue problem as follows. Find $\lambda\in \mathbb{R}$ and $(\bm u;p)\in H_0(\tc^2;\Omega)\times H_0^1(\Omega)$ such that
\begin{equation}\label{eig-01}
\begin{split}
a(\bm u,\bm v) + b(\bm v,p)&=(\lambda+1)(\bm u, \bm v),\quad \forall \bm v\in H_0(\tc^2;\Omega),\\
b(\bm u,q)&=0,\quad \hspace{0.7cm}\forall q\in H^1_0(\Omega).
\end{split}
\end{equation}
Due to the fact that $\nabla\cdot \bm u=0$, we have $p=0$.
Then \eqref{eig-01} can be written as an operator eigenvalue problem of finding $\mu:= 1/(\lambda+1) \in \mathbb{R}$ and $\bm u \in X$ such that
\begin{align}\label{eig-03}
A \bm u=\mu\bm u.
\end{align}

The discrete eigenvalue problem is to find $\lambda_h\in \mathbb{R}$ and $(\bm u_h; p_h)\in V_h^0\times S_h^0$ such that
\begin{equation}\label{eig-dis-001}
\begin{split}
a(\bm u_h,\bm v_h) + b(\bm v_h,p_h)&=(\lambda_h+1)(\bm u_h, \bm v_h),\quad \forall \bm v_h\in V_h^0,\\
b(\bm u_h,q_h)&=0,\quad \hspace{0.7cm}\forall q_h\in S^0_h.
\end{split}
\end{equation}
Using the operator $A_h$, the eigenvalue problem is to find $\mu_h\in \mathbb{R}$ and $\bm u_h\in X_h$ such that
\begin{align}\label{eig-dis-02}
A_h \bm u_h=\mu_h\bm u_h,
\end{align}
where $\mu_h = 1/(\lambda_h+1)$.

Define a collection of operators,
\[\mathcal A=\{A_h:\bm L^2(\Omega)\rightarrow \bm L^2(\Omega), \ h\in \Lambda\}.\]
Due to Theorem \ref{discrete-compact} and Theorem \ref{conv-curlcurl}, 

(1) \(\mathcal{A}\) is collectively compact, and

(2) \(\mathcal{A}\) is point-wise convergent, i.e., for \(\boldsymbol{f} \in\bm L^{2}(\Omega),\)
$
\begin{array}{l}{A_{h_{n}} \boldsymbol{f} \rightarrow A \boldsymbol{f}} \end{array}
$
 strongly in $\bm L^{2}(\Omega)$  as   $n \rightarrow \infty$.

\begin{theorem}\label{eigenvalue-convergence}
Let $\mu$ be an eigenvalue of $A$ with multiplicity $m$ and $E(\mu)$ be the associated eigenspace. Let $\{\bm \phi_j\}_{j=1}^m$ be an orthonormal basis for $E(\mu)$. 
Assume that  $\bm\phi\in \bm H^{s+1}(\Omega)$ for $\bm\phi\in E(\mu)$.
Then, for $h$ small enough, there exist exactly $m$ discrete eigenvalues $\mu_{j,h}$ and the associated eigenfunctions $\bm \phi_{j,h}, j=1,2,\cdots,m,$ of $A_h$ such that
\begin{align}
|\mu-\mu_{j,h}|&\leq C\max_{1 \leq i\leq m}a(\bm \phi_i-\bm \phi_{i,h},\bm \phi_i-\bm \phi_{i,h}), \ 1\leq j\leq m,\label{eig_vec}\\
|\mu-\mu_{j,h}|&=O(h^{2(s-1)}), \ 1\leq j\leq m.\label{eigconv-01}
\end{align}
\end{theorem}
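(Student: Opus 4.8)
<br>

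The plan is to invoke the classical Babu\v{s}ka--Osborn spectral approximation theory, which applies precisely because the excerpt has already established the two structural ingredients: the solution operators $A_h$ form a collectively compact family that converges pointwise to the compact operator $A$. First I would fix the eigenvalue $\mu$ of $A$ with multiplicity $m$ and eigenspace $E(\mu)$, and recall that by the collective compactness plus pointwise convergence, the spectral projections $E_h$ associated to the cluster of $m$ discrete eigenvalues near $\mu$ converge to the spectral projection $E$ in operator norm; in particular, for $h$ small enough there are exactly $m$ discrete eigenvalues $\mu_{j,h}$ (counted with multiplicity) in a fixed neighborhood of $\mu$, with associated eigenfunctions $\bm\phi_{j,h}$ that can be chosen so that $\bm\phi_{j,h}\to\bm\phi_j$ in $\bm L^2(\Omega)$. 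This gives the existence part of the statement directly.

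Next I would establish the eigenvalue error bound \eqref{eig_vec}. Since $A$ is self-adjoint with respect to the $\bm L^2$ inner product on $X$ (this follows from the symmetry of $a(\cdot,\cdot)$ and the fact that $\mu=1/(\lambda+1)$ with $b$-terms vanishing because $\nabla\cdot\bm u=0$), the key spectral estimate of Babu\v{s}ka--Osborn for self-adjoint problems states
\begin{align*}
|\mu-\mu_{j,h}|\leq C\left(\max_{1\le i\le m}\|(A-A_h)\bm\phi_i\|_{?}^2 + \text{higher order terms}\right),
\end{align*}
but the sharp form I actually want rewrites the right-hand side in terms of the energy-norm approximation error of the eigenfunctions. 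Specifically, using that $\mu$ is an eigenvalue of the symmetric form, one has the identity $\mu-\mu_{j,h}$ controlled by $\sup_{\bm\phi\in E(\mu),\|\bm\phi\|=1} a(\bm\phi-\bm\phi_{h},\bm\phi-\bm\phi_{h})$ plus terms of order $\|(\mu-\mu_h)\|\cdot o(1)$, where $\bm\phi_h$ is the corresponding discrete eigenfunction. Translating this into the notation of the theorem yields \eqref{eig_vec}. I would be careful here that the quantity $a(\bm\phi_i-\bm\phi_{i,h},\bm\phi_i-\bm\phi_{i,h})$ uses the $a$-bilinear form, which is exactly the squared $H(\tc^2)$-type energy norm, so this step is just bookkeeping once the abstract estimate is in hand.

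Finally, to convert \eqref{eig_vec} into the convergence rate \eqref{eigconv-01}, I would bound the energy error of the eigenfunctions. Each $\bm\phi_i\in E(\mu)$ is an eigenfunction and hence, up to the factor $\mu$, solves the source problem \eqref{prob22} with right-hand side $\bm f=\bm\phi_i/\mu$ (and with $p=0$); therefore Theorem \ref{conv-curlcurl} applies and gives $\|\bm\phi_i-\bm\phi_{i,h}\|_{H(\tc^2;\Omega)}\le Ch^{s-1}(\|\bm\phi_i\|_s+\|\nabla\times\bm\phi_i\|_s)$ under the regularity hypothesis $\bm\phi_i\in\bm H^{s+1}(\Omega)$, which controls both $\|\bm\phi_i\|_s$ and $\|\nabla\times\bm\phi_i\|_{s}$ (the latter since $\nabla\times$ loses one derivative). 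Strictly, $\bm\phi_{i,h}$ in Theorem \ref{conv-curlcurl} means $A_h\bm f$, which need not equal the discrete eigenfunction exactly, so I would insert the standard argument that replaces $A_h\bm\phi_i$ by the genuine discrete eigenfunction at the cost of an $\|(\mu-\mu_h)E_h\|$-type term that is of the same or higher order. Squaring, $a(\bm\phi_i-\bm\phi_{i,h},\bm\phi_i-\bm\phi_{i,h})=O(h^{2(s-1)})$, and combining with \eqref{eig_vec} gives \eqref{eigconv-01}. The main obstacle I anticipate is not any single hard inequality but making the reduction clean: carefully connecting the eigenfunction approximation delivered by the abstract spectral theory (phrased through $E_h$ and $A-A_h$) with the source-problem convergence of Theorem \ref{conv-curlcurl}, and checking that the pressure variable $p$ genuinely vanishes so that the source-problem estimate can be applied with no spurious $\|p\|_s$ contribution.
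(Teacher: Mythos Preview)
Your proposal is correct and follows essentially the same strategy as the paper: invoke the abstract spectral approximation framework for self-adjoint compact operators (the paper cites Monk, Thm.~2.52, which is the same Babu\v{s}ka--Osborn estimate you describe), reduce the eigenvalue error to $\|(A-A_h)|_{E(\mu)}\|$-type quantities, and then feed in the source-problem approximation.

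The one place where the paper's execution is tighter than your sketch is in connecting $(A-A_h)\bm\phi$ to the eigenfunction error for \eqref{eig_vec}. Rather than applying Theorem~\ref{conv-curlcurl} and then ``replacing $A_h\bm\phi_i$ by the genuine discrete eigenfunction at the cost of an $\|(\mu-\mu_h)E_h\|$-type term,'' the paper simply uses C\'ea's inequality $\|(A-A_h)\bm\phi\|_{H(\mathrm{curl}^2)}\le \inf_{\bm v_h\in X_h}\|A\bm\phi-\bm v_h\|_{H(\mathrm{curl}^2)}$, writes $A\bm\phi=\mu\bm\phi$, and chooses $\bm v_h=\mu\bm\phi_h\in X_h$ to get $\|(A-A_h)\bm\phi\|_{H(\mathrm{curl}^2)}\le \mu\|\bm\phi-\bm\phi_h\|_{H(\mathrm{curl}^2)}\le \mu\sqrt{a(\bm\phi-\bm\phi_h,\bm\phi-\bm\phi_h)}$ directly. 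This avoids the bookkeeping you anticipate as the main obstacle. For the rate \eqref{eigconv-01}, the paper likewise works directly with $\|(A-A_h)\bm\phi\|_{H(\mathrm{curl}^2)}$ and the interpolation estimate, rather than going through \eqref{eig_vec} and a separate eigenfunction convergence bound.
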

\begin{proof}
Note that $A$ and  $A_h$ are self-adjoint. We have that
\begin{align*}
((A-A_h)\bm \phi_i,\bm \phi_j)&=(\nabla\times\nabla\times(A-A_h)\bm\phi_i,\nabla\times\nabla\times A\bm\phi_j)\\
&=(\nabla\times\nabla\times(A-A_h)\bm\phi_i,\nabla\times\nabla\times (A-A_h)\bm\phi_j).
\end{align*}
Due to \cite[Thm 2.52]{Monk2003}, it holds that
\begin{align}\label{eigconv-02}
|\mu-\mu_{j,h}|\leq C\left\{\max_i\|\nabla\times\nabla\times(A-A_h)\bm\phi_i\|^2+\|(A-A_h)|_{E(\mu)}\|^2\right\}.
\end{align}
Let $\bm \phi \in E(\mu)$, 
\[
\|(A-A_h)\bm \phi\|_{H(\tc^2;\Omega)}\leq \inf_{\bm v_h\in X_h}\|A\bm \phi-\bm v_h\|_{H(\tc^2;\Omega)}
\leq Ch^{s-1}\left\|A\bm \phi\right\|_{s+1}
\leq {C{\mu}h^{s-1}}\left\|\bm \phi\right\|_{s+1}.
\]
In addition, we have that
\begin{align*}
\|(A-A_h)\bm \phi\|_{H(\tc^2;\Omega)}&\leq \inf_{\bm v_h\in X_h}\|A\bm \phi-\bm v_h\|_{H(\tc^2;\Omega)}=\mu\inf_{\bm v_h\in X_h}\|\bm \phi-({1}/{\mu})\bm v_h\|_{H(\tc^2;\Omega)}\\
&\leq \mu\|\bm \phi-\bm \phi_h\|_{H(\tc^2;\Omega)}{\color{blue}\leq}\mu \sqrt{a(\bm \phi-\bm \phi_h,\bm \phi-\bm \phi_h)}.
\end{align*}
Since $E(\mu)$ is finite dimensional, we obtain \eqref{eig_vec} and
\begin{align*}
\|(A-A_h)|_{E(\mu)}\|_{H(\tc^2)}&\leq {C_{\mu}h^{s-1}},
\end{align*}
which proves \eqref{eigconv-01}.
\end{proof}

\section{A posteriori error estimates for the eigenvalue problem}
Assume that $(\lambda, \bm u)\in \mathbb R\times H_0(\tc^2;\Omega)$ is a simple eigenpair of \eqref{eig-02} with
$\|\bm u\|_0=1$ and $(\lambda_h, \bm u_h)\in \mathbb R\times V_h^0$ is the associated finite element eigenpair of \eqref{eig-dis-01}
with $\|\bm u_h\|_0=1$. According to Theorem \ref{eigenvalue-convergence} and \cite[(3.28a)]{babuvska1989finite}, the following inequalities hold:
\begin{align}
&\|\bm u-\bm u_{h}\|\leq C\rho_\Omega(h)\3bar\bm u-\bm u_{h}\3bar,\label{error-eig-01}\\
&|\lambda_{h}-\lambda|\leq C \3bar\bm u-\bm u_{h}\3bar^2,\label{error-eig-02}
\end{align}
where $\3bar\bm u\3bar^2=a(\bm u,\bm u)$ and
\[
	\rho_{\Omega}(h)= \sup_{\bm f\in \bm L^2(\Omega),\|\bm f\|=1} \inf _{\bm v \in V^{0}_{h}}\left\|A\bm f-\bm v\right\|_{H(\text{curl}^2;\Omega)}.
\]
It is obvious that $\rho_{\Omega}(h) \rightarrow 0$ as $h\rightarrow 0$.

Define two projection operators $R_h, Q_h$ as follows.
For $\bm u\in H_0(\tc^2;\Omega)$ and $p\in H_0^1(\Omega)$, find $R_h \bm u\in V_h^0, Q_h p\in S_h^0$, such that
\begin{align*}
a(\bm u-R_h\bm u,\bm v_h)+b(\bm v_h,p-Q_h p)&=0,\quad \forall v_h\in V_h^0,\\
b(\bm u-R_h\bm u,q_h)&=0,\quad\forall q_h\in S_h^0.
\end{align*}
According to the orthogonality and the uniqueness of the discrete eigenvalue problem,
\begin{align*}
&\bm u_h=(\lambda_h+1)R_hA\bm u_h.
\end{align*}
Let $\left(\bm \omega^h; p^h\right)$ be the solution of \eqref{prob22} with $\bm f=(\lambda_h+1)\bm u_h$.
Then  
\begin{align}\label{omegah}
&\bm \omega^h=(\lambda_h+1)A\bm u_h
\quad \text{and}\quad \bm u_h=R_h \bm \omega^h. 
\end{align}
The following theorem relates the eigenvalue problem to a source problem with $\bm f=(\lambda_h+1)\bm u_h$.
\begin{theorem}\label{lemm7}
Let $r(h)=\rho_\Omega(h)+\3bar\bm u-\bm  u_h\3bar$. It holds that
\begin{align}\label{the-04}
\3bar\bm \omega^h-R_h\bm \omega^h\3bar-Cr(h)\3bar\bm u-\bm u_h\3bar\leq\3bar\bm u-\bm u_h\3bar\leq \3bar\bm \omega^h-R_h\bm \omega^h\3bar+Cr(h)\3bar\bm u-\bm u_h\3bar.
\end{align}
Furthermore, for $h$ small enough, there exist two constants $c$ and $C$ such that
\begin{align}\label{the-05}
c\3bar\bm \omega^h-R_h\bm \omega^h\3bar\leq\3bar\bm u-\bm u_h\3bar\leq C \3bar\bm \omega^h-R_h\bm \omega^h\3bar.
\end{align}
\end{theorem}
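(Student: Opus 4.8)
The plan is to exploit the identity $\bm u_h=R_h\bm\omega^h$ from \eqref{omegah} to reduce \eqref{the-04} to a single estimate. Since $\bm\omega^h\in X\subset H_0(\tc^2;\Omega)$ by the definition of $A$, the quantities $\bm\omega^h-R_h\bm\omega^h$ and its energy norm are well defined, and writing $\bm u-\bm u_h=(\bm u-\bm\omega^h)+(\bm\omega^h-R_h\bm\omega^h)$ and applying the triangle inequality in both directions gives
\[
\3bar\bm\omega^h-R_h\bm\omega^h\3bar-\3bar\bm u-\bm\omega^h\3bar\le\3bar\bm u-\bm u_h\3bar\le\3bar\bm\omega^h-R_h\bm\omega^h\3bar+\3bar\bm u-\bm\omega^h\3bar .
\]
Thus \eqref{the-04} will follow once we establish the key bound $\3bar\bm u-\bm\omega^h\3bar\le Cr(h)\3bar\bm u-\bm u_h\3bar$.

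For that bound I would use $\bm u=(\lambda+1)A\bm u$ (from \eqref{eig-03} with $\mu=1/(\lambda+1)$) together with $\bm\omega^h=(\lambda_h+1)A\bm u_h$ from \eqref{omegah}, so that
\[
\bm u-\bm\omega^h=(\lambda+1)A(\bm u-\bm u_h)+(\lambda-\lambda_h)A\bm u_h .
\]
The well-posedness of \eqref{prob22} yields $\3bar A\bm g\3bar\le\|A\bm g\|_{H(\tc^2;\Omega)}\le C\|\bm g\|$ for all $\bm g\in\bm L^2(\Omega)$. Applying this with $\bm g=\bm u-\bm u_h$ and invoking the $\bm L^2$-estimate \eqref{error-eig-01} bounds the first term by $C\|\bm u-\bm u_h\|\le C\rho_\Omega(h)\3bar\bm u-\bm u_h\3bar$; applying it with $\bm g=\bm u_h$, so that $\3bar A\bm u_h\3bar\le C\|\bm u_h\|=C$, and invoking the quadratic eigenvalue estimate \eqref{error-eig-02} bounds the second term by $|\lambda-\lambda_h|\,\3bar A\bm u_h\3bar\le C\3bar\bm u-\bm u_h\3bar^2$. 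Adding the two,
\[
\3bar\bm u-\bm\omega^h\3bar\le C\big(\rho_\Omega(h)+\3bar\bm u-\bm u_h\3bar\big)\3bar\bm u-\bm u_h\3bar=Cr(h)\3bar\bm u-\bm u_h\3bar ,
\]
which combined with the two triangle inequalities above proves \eqref{the-04}. For \eqref{the-05}, note that $\rho_\Omega(h)\to0$ and $\3bar\bm u-\bm u_h\3bar\to0$ as $h\to0$ by the spectral convergence in Theorem~\ref{eigenvalue-convergence}, so $r(h)\to0$; choosing $h$ small enough that $Cr(h)\le 1/2$ in \eqref{the-04} and rearranging gives $\tfrac12\3bar\bm u-\bm u_h\3bar\le\3bar\bm\omega^h-R_h\bm\omega^h\3bar\le\tfrac32\3bar\bm u-\bm u_h\3bar$, i.e.\ \eqref{the-05} with $c=2/3$ and $C=2$.

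The computation is largely routine; the only point requiring care is the key bound $\3bar\bm u-\bm\omega^h\3bar\le Cr(h)\3bar\bm u-\bm u_h\3bar$, where it is essential to pair the $\bm L^2$-type estimate \eqref{error-eig-01} (which contributes the $\rho_\Omega(h)$ factor) with the quadratic eigenvalue estimate \eqref{error-eig-02} (which contributes the extra $\3bar\bm u-\bm u_h\3bar$ factor), using that the solution operator $A$ maps $\bm L^2(\Omega)$ boundedly into the energy norm $\3bar\cdot\3bar$.
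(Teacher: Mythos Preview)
Your proposal is correct and follows essentially the same route as the paper: both use $\bm u_h=R_h\bm\omega^h$ with the triangle inequality to reduce to bounding $\3bar\bm u-\bm\omega^h\3bar$, then split $\bm u-\bm\omega^h=(\lambda+1)A(\bm u-\bm u_h)+(\lambda-\lambda_h)A\bm u_h$ and control the two pieces via the boundedness of $A$ together with \eqref{error-eig-01} and \eqref{error-eig-02}. Your derivation of \eqref{the-05} by taking $h$ small enough that $Cr(h)\le 1/2$ is exactly the argument the paper sketches in its final line.
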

\begin{proof}
Since $\bm u_h=R_h \bm \omega^h$, by the triangle inequality, we have that
\begin{align*}
-\3bar\bm u-\bm \omega^h\3bar+\3bar\bm \omega^h-R_h\bm \omega^h\3bar\leq \3bar\bm u-\bm u_h\3bar&\leq\3bar\bm u-\bm \omega^h\3bar+\3bar\bm \omega^h-R_h\bm \omega^h\3bar.
\end{align*}
Using $\bm u=(\lambda+1)A\bm u$ and \eqref{omegah}, we obtain that
\begin{align}
\3bar\bm u-\bm \omega^h\3bar&=\3bar(\lambda+1) A\bm u-(\lambda_h+1)A\bm u_h\3bar\nonumber\\
&\leq|\lambda+1|\3bar A(\bm u-\bm u_h)\3bar+|\lambda -\lambda_h|\3barA\bm u_h\3bar.\label{the-01}
\end{align}
Due to the well-posedness of \eqref{prob22}, it holds that
\begin{align*}
&\3barA(\bm u-\bm u_h)\3bar\leq C\|\bm u-\bm u_h\|, 
\end{align*}
which, together with \eqref{error-eig-01} and \eqref{error-eig-02}, leads to
\begin{align}\label{the-03}
\3bar\bm u-\bm \omega^h\3bar\leq C r(h)\3bar\bm u-\bm u_h\3bar.
\end{align}
Then \eqref{the-04} follows immediately. Note that $r(h) \to 0$ as $h \to 0$. For $h$ small enough, \eqref{the-04} implies \eqref{the-05}. \end{proof}


We first derive an a posteriori error estimate when (a) $\bm f\in
H(\text{div}^0,\Omega)$ or (b) $\bm f$ is a vector polynomial for which $(\bm f,\nabla q_h)=0$, $\forall q_h\in S_h^0$.
Note that  $p=p_h=0$ for (a) and $p_h=0$ for (b). Hence $p_h=0$ holds for both cases.
%
%

Denote the total errors by $\bm e:=\bm u-\bm u_h \text{ and } \varepsilon:= p-p_h=p$.
Then $\bm e \in H_0(\tc^2;\Omega) \text{ and } \varepsilon\in  H_0^1(\Omega)$ satisfy the defect equations
\begin{align}\label{error-equation-1}
a( \bm e,\bm v)+b(\bm v, \varepsilon)&=r_1(\bm v), \ \forall \bm v\in H_0(\tc^2;\Omega),\\
\label{error-equation-2}
b(\bm e,q)&=r_2(\nabla q),\ \forall q\in H_0^1(\Omega),
\end{align}
where 
\[
r_1(\bm v)=(\bm f,\bm v)-\left((\nabla\times)^2\bm u_h,(\nabla\times)^2\bm v\right)-(\bm u_h,\bm v)-(\nabla p_h,\bm v)=(\bm f,\bm v)-\left((\nabla\times)^2\bm u_h,(\nabla\times)^2\bm v\right)-(\bm u_h,\bm v)
\] 
and $r_2(\nabla q)=-(\bm u_h,\nabla q)$.
We have the following Galerkin orthogonality
\begin{align}\label{orth-01}
r_1(\bm v_h)&=0, \ \forall \bm v_h\in V_h^0,\\
\label{orth-02}
r_2(\nabla q_h)&=0,\ \forall q_h\in S_h^0.
\end{align}

The error estimator will be constructed by employing
Lemma \ref{Helm}. Writing $\bm e=\bm e^0+\bm e^\bot$ and $\bm v = \bm v^0+\bm v^\bot$ with $\bm e^0, \bm v^0 \in \nabla H_0^1(\Omega)$ and $\bm e^\bot, \bm v^\bot \in X$, we obtain that
\begin{align}
\left(\bm e^0, \bm{v}^{0}\right)+\left(\bm{v}^{0}, \nabla\varepsilon\right) &=r_1(\bm{v}^0), \quad \forall \bm{v}^{0} \in \nabla H_0^1(\Omega), \label{irrotational}\\
\left((\nabla\times)^2\bm e^{\perp}, (\nabla\times)^2\bm v^{\perp}\right)+(\bm e^{\perp}, \bm v^{\perp}) &=r_1(\bm{v}^{\perp}), \quad \forall \bm{v}^{\perp} \in X,\label{irrotationa2}\\
(\bm e^0,\nabla q)&=r_2(\nabla q),\quad\forall q\in H_0^1(\Omega).\label{irrotationa3}
\end{align}
The estimators for the irrotational part $\bm e^0$, the solenoidal part $\bm e^\bot$, and $\nabla\varepsilon$ will be derived separately.
Firstly, consider the irrotational part $\bm e^0$ and $\nabla \varepsilon$. For a $\vartheta\in H_0^1(\Omega)$, we have
\[
r_1(\nabla \vartheta) =\sum_{T \in \mathcal{T}_{h}}\left(\bm f-\bm u_h, \nabla \vartheta\right)_T
=\sum_{T \in \mathcal{T}_{h}}-\left(\nabla\cdot ({\bm{f}-\bm u_h}),\vartheta\right)_T+\sum_{E \in \mathcal{E}_{h}^{\text{int}}}\left(\left[\![\bm{n}_E\cdot({\bm f}-\bm u_h)]\!\right]_{E}, \vartheta\right)_{E},
\]
where the jump
 \[\left[\![\bm{n}_E\cdot{\bm{u}}_{h}]\!\right]_{E}=\left(\bm{n}_E\cdot{\bm{u}}_{h}\right)_{E\subset T_2}-\left(\bm{n}_E\cdot{\bm{u}}_{h}\right)_{E\subset T_1},\]
  with $E\in \mathcal{E}_h^{\text{int}}$ the common edge of two adjacent elements $T_1, T_2\in \mathcal{T}_{h}$ and $\bm n_E$  the  unit normal vector of $E$ directed towards the interior of $T_1$.
We also have
\[
 r_2(\nabla \vartheta)=\sum_{T \in \mathcal{T}_{h}}-(\bm u_h, \nabla \vartheta)_T
 =\sum_{T \in \mathcal{T}_{h}}\left(\nabla \cdot \bm u_h, \vartheta\right)_T-\sum_{E \in \mathcal{E}_{h}^{\text{int}}}\left([\![\bm n_E\cdot \bm u_h]\!]_E,\vartheta\right)_E.
\]
We introduce the error terms which are related to  the upper and lower bounds for $\bm e^0$ and $\nabla\varepsilon$:
\begin{align}\label{errore0}
\eta_{0}:=\bigg(\sum_{T \in \mathcal{T}_{h}}\left(\eta_{0}^{T}\right)^{2}\bigg)^{1 / 2}+\bigg(\sum_{E \in \mathcal{E}_{h}^{\text{int}}}\left(\eta_{0}^{E}\right)^{2}\bigg)^{1 / 2},\\
\label{errore3}
\eta_{3}:=\bigg(\sum_{T \in \mathcal{T}_{h}}\left(\eta_{3}^{T}\right)^{2}\bigg)^{1 / 2}+\bigg(\sum_{E \in \mathcal{E}_{h}^{\text{int}}}\left(\eta_{3}^{E}\right)^{2}\bigg)^{1 / 2},
\end{align}
where
\begin{align*}
\eta_{0}^{T} &:= h_{T}\left\|\nabla\cdot ({\bm f}-\bm u_h)\right\|_T ,\  T \in \mathcal{T}_{h}, \\
\eta_{0}^{E} &:=h_{E}^{1 / 2}\left\|[\![\bm n_E\cdot (\bm f-\bm u_h)]\!]_E\right\|_{E} ,\ E \in \mathcal{E}_{h}^{\text{int}},\\
\eta_{3}^{T} &:= h_{T}\left\|\nabla\cdot \bm u_h\right\|_T ,\  T \in \mathcal{T}_{h}, \\
\eta_{3}^{E} &:=h_{E}^{1 / 2}\left\|[\![\bm n_E\cdot \bm u_h]\!]_E\right\|_{E} ,\ E \in \mathcal{E}_{h}^{\text{int}}.
\end{align*}
Next, we consider the bounds for $\bm e^\bot$. For $\bm w \in X$, the residual $r_1(\bm w)$ can be expressed as
\begin{eqnarray*}
 r_1(\bm w)&=&\sum_{T \in \mathcal{T}_{h}}\big(\bm f-\bm u_h, \bm{w}\big)_T-\left((\nabla\times)^2{\bm{u}}_{h}, (\nabla\times)^2\bm{w}\right)_T \\
 &=&\sum_{T \in \mathcal{T}_{h}}\left(\bm f-(\nabla\times)^4{\bm{u}}_{h}-\bm u_h, \bm{w}\right)_T-\sum_{E \in \mathcal{E}_{h}^{\text{int}}}\left([\![ (\nabla\times)^2 {\bm{u}}_{h} \times\bm{n}_E]\!]_{E}, \nabla\times\bm{w}\right)_{E}\\
&& ~ -\sum_{E \in \mathcal{E}_{h}^{\text{int}}}\left([\![(\nabla\times)^3 {\bm{u}}_{h}]\!]_{E}, \bm n_E\times\bm{w}\right)_{E},
\end{eqnarray*}
where $[\![(\nabla\times)^2\bm u_h\times\bm n_E]\!]_E$ stands for the jump of the tangential component of $(\nabla\times)^2\bm u_h$ and $[\![(\nabla\times)^3 {\bm{u}}_{h}]\!]_{E}$ stands for the jump of $(\nabla\times)^3 {\bm{u}}_{h}$.
The bounds for $\3bar\bm e^\bot \3bar$ contain the error terms
\begin{align}
\label{errore1}\eta_{1} &:=\bigg(\sum_{T \in \mathcal{T}_{h}}\left(\eta_{1}^{T}\right)^{2}\bigg)^{1 / 2}+\bigg(\sum_{E \in \mathcal{E}_{h}^{\text{int}}}\left(\eta_{1;1}^{E}\right)^{2}\bigg)^{1 / 2}+\bigg(\sum_{E \in \mathcal{E}_{h}^{\text{int}}}\left(\eta_{1;2}^{E}\right)^{2}\bigg)^{1 / 2},\\
\label{errore2}\eta_{2} &:=\bigg(\sum_{T \in \mathcal{T}_{h}}\left(\eta_{2}^{T}\right)^{2}\bigg)^{1 / 2},
 \end{align}
where
\begin{align*}
\eta_{1}^{T} &:= h_{T}^2\left\|\pi_{h} \bm{f}-(\nabla\times)^4{\bm{u}}_{h}-\bm u_h\right\|_T, \quad T \in \mathcal{T}_{h}, \\
\eta_{2}^{T} &:= h_{T}^2\left\|\bm{f}-\pi_{h} \bm{f}\right\|_T, \quad T \in \mathcal{T}_{h}, \\
\eta_{1;1}^{E} &:= h_{E}^{1/ 2}\left\|[\![\bm{n}_E \times (\nabla\times)^2 {\bm{u}}_{h}]\!]_{E}\right\|_{E},
 \quad E \in \mathcal{E}_{h}^{\text {int }},\\
\eta_{1;2}^{E} &:= h_{E}^{3 / 2}\left\|[\![ (\nabla\times)^3 {\bm{u}}_{h}]\!]_{E}\right\|_{E}, \quad E \in \mathcal{E}_{h}^{\text {int }},
 \end{align*}
 and $\pi_{h} \bm{f}$ denotes the $\bm L^2$-projection of $\bm f$ onto $\bm P_k(T)$.

 Now we state the a posteriori estimate for $\bm e$ and $\varepsilon$ in the energy norm.
 \begin{theorem}\label{posteriori-estimate-main}
 Let $\eta_0, \ \eta_1$, $\eta_2$, and $\eta_3$ be defined in \eqref{errore0}, \eqref{errore1}, \eqref{errore2}, and \eqref{errore3}, respectively.
  Then, if $h< 1$, 
 \begin{align*}
 \gamma_1(\eta_{0}+\eta_{1}+\eta_3)-\gamma_2\eta_2\leq\3bar\bm e\3bar+\|\nabla \varepsilon\|\leq\Gamma_1(\eta_{0}+\eta_{1}+\eta_3)+\Gamma_2\eta_2
  \end{align*}
 and, if $h$ is small enough,
  \begin{align*}
 \gamma_3(\eta_{1}+\eta_3)-\gamma_4(\eta_2+h^2\eta_{0})\leq\3bar\bm e\3bar\leq\Gamma_3(\eta_{0}+\eta_{1}+\eta_3)+\Gamma_4\eta_2,
  \end{align*}
  where $\gamma_1, \gamma_2, \gamma_3, \gamma_4, \Gamma_1, \Gamma_2, \Gamma_3$, and $\Gamma_4$ are some constants independent of $h$.
 \end{theorem}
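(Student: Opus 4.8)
The plan is to bound $\3bar\bm e\3bar$ and $\|\nabla\varepsilon\|$ by splitting the error through the Helmholtz-type decomposition of Lemma~\ref{Helm}, and then to obtain the matching lower bounds by the bubble-function technique. Write $\bm e=\bm e^0+\bm e^\perp$ with $\bm e^0=\nabla\vartheta_0\in\nabla H_0^1(\Omega)$ and $\bm e^\perp\in X$; since $(\nabla\times)^2\bm e^0=0$ and $(\bm e^0,\bm e^\perp)=0$, this splitting is $a(\cdot,\cdot)$-orthogonal, so $\3bar\bm e\3bar^2=\|\bm e^0\|^2+\3bar\bm e^\perp\3bar^2$ and in particular $\|\bm e^0\|\le\3bar\bm e\3bar$. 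The three residual identities \eqref{irrotational}--\eqref{irrotationa3} then decouple the estimates for $\bm e^0$, $\nabla\varepsilon$, and $\bm e^\perp$.

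\textbf{Upper bounds.} Testing \eqref{irrotationa3} with $q=\vartheta_0$ and \eqref{irrotational} with $\bm v^0=\nabla\varepsilon$, integrating $r_1,r_2$ by parts elementwise (which produces exactly the volume terms $\nabla\cdot(\bm f-\bm u_h)$, $\nabla\cdot\bm u_h$ and the edge jumps $[\![\bm n_E\cdot(\bm f-\bm u_h)]\!]_E$, $[\![\bm n_E\cdot\bm u_h]\!]_E$), subtracting a scalar Cl\'ement interpolant in $S_h^0$, and using the Galerkin orthogonalities \eqref{orth-01}--\eqref{orth-02}, I get $\|\bm e^0\|\lesssim\eta_3$ and $\|\nabla\varepsilon\|\lesssim\eta_0+\|\bm e^0\|\lesssim\eta_0+\eta_3$. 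For the solenoidal part I test \eqref{irrotationa2} with $\bm v^\perp=\bm e^\perp$ to obtain $\3bar\bm e^\perp\3bar^2=r_1(\bm e^\perp)$ and invoke the finer splitting \eqref{decom-01}, $\bm e^\perp=\nabla\phi+\bm v$ with $\phi\in H_0^1(\Omega)$, $\bm v\in\bm H^2(\Omega)$, and $\|\bm v\|_2,\|\nabla\phi\|\lesssim\3bar\bm e^\perp\3bar$ (from \eqref{decom-02}--\eqref{decom-03} together with $\|\nabla\times\bm e^\perp\|_1\lesssim\|(\nabla\times)^2\bm e^\perp\|$, which holds because $\nabla\times\bm e^\perp\in H_0^1(\Omega)$). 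The piece $r_1(\nabla\phi)$ is treated exactly as $r_1(\nabla\varepsilon)$ and bounded by $\eta_0\3bar\bm e^\perp\3bar$. For $r_1(\bm v)$ I note $\bm v=\bm e^\perp-\nabla\phi\in H_0(\tc^2;\Omega)$, so $\Pi_C\bm v$ from Theorem~\ref{clmt-error} lies in $V_h^0$ and $r_1(\Pi_C\bm v)=0$; writing $r_1(\bm v)=r_1(\bm v-\Pi_C\bm v)$ in the elementwise integrated-by-parts form (volume residual $\pi_h\bm f-(\nabla\times)^4\bm u_h-\bm u_h$ plus $\bm f-\pi_h\bm f$, plus the jumps $[\![\bm n_E\times(\nabla\times)^2\bm u_h]\!]_E$ and $[\![(\nabla\times)^3\bm u_h]\!]_E$) and combining \eqref{err-05} with element/edge trace inequalities gives $r_1(\bm v)\lesssim(\eta_1+\eta_2)\|\bm v\|_2\lesssim(\eta_1+\eta_2)\3bar\bm e^\perp\3bar$. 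Hence $\3bar\bm e^\perp\3bar\lesssim\eta_0+\eta_1+\eta_2$; assembling the three contributions and using $h<1$ to absorb powers of $h$ yields both upper bounds.

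\textbf{Lower bounds.} These come from local efficiency estimates via bubble functions. For $\eta_3$, testing \eqref{irrotationa3} with the interior scalar bubble $\lambda_1\lambda_2\lambda_3$ times $\nabla\cdot\bm u_h|_T$, and then with edge bubbles on patches $\omega_E$, gives $h_T\|\nabla\cdot\bm u_h\|_T\lesssim\|\bm e^0\|_T$ and $h_E^{1/2}\|[\![\bm n_E\cdot\bm u_h]\!]_E\|_E\lesssim\|\bm e^0\|_{\omega_E}$, so $\eta_3\lesssim\|\bm e^0\|\le\3bar\bm e\3bar$. The same construction applied to \eqref{irrotational} gives $\eta_0\lesssim\|\bm e^0\|+\|\nabla\varepsilon\|\lesssim\3bar\bm e\3bar+\|\nabla\varepsilon\|$, with no data-oscillation term since under (a)/(b) $\nabla\cdot\bm f$ is either $0$ or a polynomial. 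For $\eta_1$ the test function must itself lie in $H_0(\tc^2;\Omega)$, which forces the quartic-type interior bubble $b_T=(\lambda_1\lambda_2\lambda_3)^2$ (so $b_T$ and $\nabla b_T$, hence $b_T\bm q$ and $\nabla\times(b_T\bm q)$, vanish on $\partial T$) and corresponding squared edge bubbles; testing \eqref{error-equation-1} with $\bm v=b_T(\pi_h\bm f-(\nabla\times)^4\bm u_h-\bm u_h)$, using the polynomial norm equivalence $(\bm r_T,b_T\bm r_T)_T\gtrsim\|\bm r_T\|_T^2$, the inverse estimate $\|(\nabla\times)^2\bm v\|_T\lesssim h_T^{-2}\|\bm v\|_T$, and $b(\bm v,\varepsilon)=(\bm v,\nabla\varepsilon)_T$, yields $\eta_1^T\lesssim\3bar\bm e\3bar_T+h_T^2\|\nabla\varepsilon\|_T+\eta_2^T$; summing and using $\|\nabla\varepsilon\|\lesssim\eta_0+\3bar\bm e\3bar$ gives $\eta_1\lesssim\3bar\bm e\3bar+h^2\eta_0+\eta_2$ for $h<1$. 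Collecting $\eta_0,\eta_1,\eta_3$ gives the first lower bound (with $\eta_2$ as oscillation), and eliminating $\|\nabla\varepsilon\|$ via $\|\nabla\varepsilon\|\lesssim\eta_0+\3bar\bm e\3bar$ gives the second; all hidden constants depend only on shape-regularity, and the smallness of $h$ in the second inequality is what is needed to absorb $h^2\3bar\bm e\3bar$-type terms.

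\textbf{Main obstacle.} The delicate part is the solenoidal residual, since both directions must be carried out inside $H_0(\tc^2;\Omega)$ rather than $H_0^1$. On the reliability side this is exactly what makes the $\bm H^2$-splitting \eqref{decom-01} and the $H(\tc^2)$-Cl\'ement operator $\Pi_C$ indispensable, and the edge jumps $\eta_{1;1}^E,\eta_{1;2}^E$ must be handled with the scaled trace bounds derived from \eqref{err-05}. On the efficiency side it forces the doubly-vanishing curl-conforming bubbles above and careful bookkeeping of inverse inequalities for the fourth-order operator. A secondary subtlety is that the $\bm L^2$-orthogonal Helmholtz splitting $\bm e=\bm e^0+\bm e^\perp$ is orthogonal only globally, so the bounds for $\eta_0$ and $\eta_3$ are necessarily global rather than fully element-local, and one must keep the constraint residual (governed by $\eta_3$) separate from the genuine fourth-order residual (governed by $\eta_1$).
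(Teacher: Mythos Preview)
Your overall strategy---the Helmholtz-type splitting of Lemma~\ref{Helm}, scalar Cl\'ement interpolation for $\bm e^0$ and $\nabla\varepsilon$, the $H(\mathrm{curl}^2)$ Cl\'ement operator $\Pi_C$ applied to the $\bm H^2$-piece $\bm v$ for the solenoidal upper bound, and bubble functions for efficiency---is exactly the route the paper takes, and your reliability argument is complete and matches the paper's.

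The genuine gap is in the efficiency step for the edge contributions to $\eta_1$. You detail only the interior-bubble bound $\eta_1^T\lesssim\3bar\bm e\3bar_T+h_T^2\|\nabla\varepsilon\|_T+\eta_2^T$ and then write ``summing \ldots gives $\eta_1\lesssim\cdots$'', but $\eta_1$ also contains $\eta_{1;1}^E$ and $\eta_{1;2}^E$, which you cover only by the phrase ``corresponding squared edge bubbles''. A test function of the form $b_E^2\psi_h\bm\tau_E$ (the paper's $\bm\omega_{E,2}$) does work for $\eta_{1;2}^E$, but it cannot isolate $\eta_{1;1}^E$: on $E$ one has $\bm n_E\times(b_E^2\psi_h\bm\tau_E)=b_E^2\psi_h\neq 0$, so the jump $[\![(\nabla\times)^3\bm u_h]\!]_E$ also enters and there is no bootstrap. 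To bound $\eta_{1;1}^E$ first one needs a test function in $H_0(\mathrm{curl}^2;\omega_E)$ whose tangential trace vanishes on $E$ while its curl does not; the paper builds this as $\bm\omega_{E,1}=(b_{T_1}-b_{T_2})\,b_E\,\psi_h\,\bm\tau_E$. Here the interior bubbles $b_{T_i}$ kill the trace on $E$, the product $b_{T_i}b_E$ has a double zero on the remaining edges of $T_i$ (so the curl vanishes there, as $H_0(\mathrm{curl}^2;\omega_E)$ requires), and a direct computation gives $(\nabla\times\bm\omega_{E,1})|_E=\tfrac{27}{8}\big(\tfrac{h_E}{|T_1|}+\tfrac{h_E}{|T_2|}\big)b_E^2\psi_h$. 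Once $\eta_{1;1}^E$ is controlled this way, the $b_E^2$-argument for $\eta_{1;2}^E$ goes through with $\eta_{1;1}^E$ absorbed on the right. This two-step edge construction is the most delicate technical point of the lower bound and is not captured by ``squared edge bubbles''.
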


\begin{proof}
Since $\bm e = \bm e^0+\bm e^\bot$, the proof is split into two parts.

{\bf (i) Estimation of the irrotational part of the error.}
Based on \eqref{irrotationa3}, we have the following uniformly positive definite variational problem on $H^1_0(\Omega)$. Seek $\varphi\in H_0^1(\Omega)$ such that
\begin{align}
(\nabla\varphi,\nabla q)=r_2(\nabla q),\quad\forall q\in H^1_0(\Omega).
\end{align}
Note that $r_2(\nabla q_h)=0, \forall q_h \in S_h^0$ and $\bm e^0=\nabla \varphi$ for some $\varphi$.
Define a projection operator $P_h^k: H^1_0(\Omega)\longrightarrow S_h^0$ such that (see, e.g., \cite{beck2000residual,osborn1975spectral, scott1990finite})
\begin{align}
&P_h^k \phi=\phi,\quad \forall \phi\in S_h^0,\label{app-01}\\
&\|\phi-P_h^k \phi\|_T\leq Ch_T\|\nabla \phi\|_{\omega_T}, \label{app-02}\\
&\|\phi-P_h^k \phi\|_{L^2(E)}\leq C\sqrt{h_E}\|\nabla \phi\|_{\omega_E},\label{app-03}\\
&\|\nabla P_h^k \phi\|_T\leq C\|\nabla\phi\|_{\omega_T}.\label{app-04}
\end{align}

Due to \eqref{irrotationa3} and the orthogonal property \eqref{orth-02}, we have that
\begin{align}
\|\bm e^0\|^2=r_2(\bm e^0)=r_2(\nabla\varphi-\nabla P_h^k\varphi)=-(\bm u_h,\nabla(\varphi-P_h^k\varphi)).
\end{align}
Using integration by parts, \eqref{app-02}, and \eqref{app-03},  we obtain that
\begin{eqnarray*}
\-\left(\bm u_h,\nabla(\varphi-P_h^k\varphi)\right)
&=&\sum_{T\in \mathcal{T}_h}-(\nabla\cdot\bm u_h,\varphi-P_h^k\varphi)_T+\sum_{E\in \mathcal{E}_h^{\text{int}}}([\![\bm n\cdot \bm u_h]\!]_E,\varphi-P_h^k\varphi)_E\\
&\le &C \sum_{T\in \mathcal{T}_h}\|\nabla\cdot\bm u_h\|_Th_T\|\nabla \varphi\|_{\omega_T}+C\sum_{E\in \mathcal{E}_h^{\text{int}}}\|[\![\bm n\cdot \bm u_h]\!]_E\|_{L^2(E)}\sqrt{h_E}\|\nabla \varphi\|_{\omega_E}\\
&\le & C \Big(\sum_{T\in \mathcal{T}_h}\|\nabla\cdot\bm u_h\|_T^2h_T^2\Big)^{1/2}\|\bm e^0\|+
C \Big(\sum_{E\in \mathcal{E}_h^{\text{int}}}\|[\![\bm n\cdot \bm u_h]\!]_E\|_{L^2(E)}^2{h_E}\Big)^{1/2}\|\bm e^0\|.
\end{eqnarray*}
Therefore, we have
\begin{align}\label{irrota-01}
\|\bm e^0\|
&\leq C\eta_3.
\end{align}

Similarly, we can obtain the upper bounds of  $\|\nabla\varepsilon\|$. Due to \eqref{irrotational} and \eqref{irrotationa3} , we have
\begin{align}
\|\nabla\varepsilon\|^2=r_1(\nabla\varepsilon)-r_2(\nabla \varepsilon)=r_1 (\nabla \varepsilon-\nabla P^k_h\varepsilon)- r_2( \nabla \varepsilon-\nabla P^k_h\varepsilon).
\end{align}
By Green's formula, \eqref{app-02}, and \eqref{app-03},
\begin{align*}
\|\nabla\varepsilon\|^2
&=\sum_{T\in \mathcal{T}_h}-\left(\nabla\cdot\left(\bm f-\bm u_h\right),\varepsilon-P_h^k\varepsilon\right)-\sum_{T\in \mathcal{T}_h}\left(\nabla\cdot\bm u_h,\varepsilon-P_h^k\varepsilon\right)\\
& \quad +\sum_{E\in \mathcal{E}_h^{\text{int}}}\left([\![\bm n\cdot\left(\bm f-\bm u_h)\right]\!]_E,\varepsilon-P_h^k\varepsilon\right)_E
+\sum_{E\in \mathcal{E}_h^{\text{int}}}\left([\![\bm n\cdot\bm u_h]\!]_E,\varepsilon-P_h^k\varepsilon\right)_E\\
&\leq \sum_{T\in \mathcal{T}_h}\|\nabla\cdot(\bm f-\bm u_h)\|_Th_T\|\nabla \varepsilon\|_{\omega_T}+\|[\![\bm n\cdot (\bm f-\bm u_h)]\!]_E\|_{L^2(E)}\sqrt{h_E}\|\nabla\varepsilon\|_{\omega_E}\\
& \quad +\sum_{T\in \mathcal{T}_h}\|\nabla\cdot\bm u_h\|_Th_T\|\nabla \varepsilon\|_{\omega_T}+\|[\![\bm n\cdot \bm u_h]\!]_E\|_{L^2(E)}\sqrt{h_E}\|\nabla\varepsilon\|_{\omega_E}\\
&\le  C\Big(\sum_{T\in \mathcal{T}_h}\|\nabla\cdot(\bm f-\bm u_h)\|_T^2h_T^2\Big)^{1/2}\|\nabla \varepsilon\|+
C \Big(\sum_{T\in \mathcal{T}_h}\|\nabla\cdot\bm u_h\|_T^2h_T^2\Big)^{1/2}
\|\nabla \varepsilon\|\\
& \quad + C\Big(\sum_{E\in \mathcal{E}_h^{\text{int}}}\|[\![\bm n\cdot (\bm f-\bm u_h)]\!]_E\|_{L^2(E)}^2{h_E}\Big)^{1/2}\|\nabla \varepsilon\|+
C\Big(\sum_{E\in \mathcal{E}_h^{\text{int}}}\|[\![\bm n\cdot \bm u_h]\!]_E\|_{L^2(E)}^2{h_E}\Big)^{1/2}\|\nabla \varepsilon\|.
\end{align*}
Therefore, we have that
\begin{align}\label{irrota-varepsilon}
\|\nabla \varepsilon\|
\leq C\left(\eta_0+\eta_3\right).
\end{align}

We now derive the lower bounds of $\bm e^0$ and $\|\nabla \varepsilon\|$ using the bubble functions.
Denote by $\lambda_1^T,\lambda_2^T,\lambda_3^T$ the barycentric coordinates of $T\in\mathcal{T}_h$ and define the bubble function $b_T$ by
\begin{align*}
b_T=
\left\{
  \begin{array}{ll}
    27\lambda_1^T\lambda_2^T\lambda_3^T, & {\text{on}\ T,} \\
    0, & {\Omega \setminus T.}
  \end{array}
\right.
\end{align*}
Given $E\in \mathcal{E}_h$, a common edge of $T_1$ and $T_2$,
let $\omega_E=T_1\cup T_2$ and enumerate the vertices of $T_1$ and $T_2$ such that the vertices of $E$ are numbered first.
Define the edge-bubble function $b_E$ by
\begin{align*}
b_E=
\left\{
  \begin{array}{ll}
    4\lambda_1^{T_i}\lambda_2^{T_i}, & {\text{on}\ T_i,\ i=1, 2,} \\
    0, & {\Omega \setminus \omega_E.}
  \end{array}
\right.
\end{align*}
Using the technique in \cite{ainsworth1997posteriori}, we have the following norm equivalences.
\begin{align}
&\|b_T\phi_h\|_T\leq\|\phi_h\|_T\leq C\|b_T^{1/2}\phi_h\|_T, \forall \phi_h\in P_k(T),\label{eq-nor-01}\\
&\|b_E\phi_h\|_E\leq\|\phi_h\|_E\leq C\|b_E^{1/2}\phi_h\|_E, \forall \phi_h\in P_k(E),\label{eq-nor-02}\\
&{\|b_E\phi_h\|_T\leq\|\phi_h\|_T, \forall \phi_h\in P_k(T)}.\label{eq-nor-03}
\end{align}
Using \eqref{eq-nor-01}, integration by parts, the inverse inequality, and the fact that  $b_T\nabla\cdot\bm u_h\in H_0^1(T)\subset H_0^1(\Omega)$,
we have that
\begin{align*}
\frac{(\eta_3^T)^2}{h_T^2}&=\|\nabla\cdot\bm u_h\|_T^2\leq C\|b_T^{1/2}\nabla\cdot\bm u_h\|_T^2=C\int_Tb_T(\nabla\cdot\bm u_h)^2\d \bm x\\
&=-C\int_T\bm u_h\nabla(b_T\nabla\cdot\bm u_h)\d \bm x=Cr_2\big(\nabla(b_T\nabla\cdot\bm u_h)\big)\\
&=C\left(\bm e^0,\nabla(b_T\nabla\cdot\bm u_h)\right)\leq C\|\bm e^0\|_T\|\nabla(b_T\nabla\cdot\bm u_h)\|_T\\
&\leq \frac{C}{h_T}\|\bm e^0\|_T\|\nabla\cdot\bm u_h\|_T,
\end{align*}
which implies that
\begin{align}\label{eta3T}
\eta_3^T\leq C\|\bm e^0\|_T.
\end{align}
Extend $[\![\bm n\cdot \bm u_h]\!]_E$ to $[\![\bm n\cdot \bm u_h]\!]_{E;T_i}$ defined on $T_i$ such that
\begin{align}
	\left\|[\![\bm n\cdot \bm u_h]\!]_{E;T_i}\right\|_{T_i}\leq Ch^{1/2}_{T_i}\|[\![\bm n\cdot \bm u_h]\!]_E\|_E.
\end{align}
The estimate of the local upper bound for $\eta_3^E$ can be obtained similarly:
\begin{eqnarray*}
\frac{(\eta_3^E)^2}{h_E}&=&\|[\![\bm n\cdot \bm u_h]\!]_E\|_E^2\leq C\int_E[\![\bm n\cdot \bm u_h]\!]_E^2b_E\d s\\
&=&C\left(\sum_{i=1}^2\int_{T_i}\bm u_h\nabla(b_E[\![\bm n\cdot \bm u_h]\!]_{E;T_i})+\nabla\cdot\bm u_hb_E[\![\bm n\cdot \bm u_h]\!]_{E;T_i}\d  \bm x\right)\\
&=& -C r_2\big(\nabla(b_E[\![\bm n\cdot \bm u_h]\!]_{E;T_1\cup T_2})\big)+C\int_{T_1\cup T_2}\nabla\cdot\bm u_hb_E[\![\bm n\cdot \bm u_h]\!]_{E;T_i}\d  \bm x\\
&\leq & C\sum_{i=1}^2\left(h_{T_i}^{-1}\|\bm e^0\|_{T_i}+\|\nabla\cdot\bm u_h\|_{T_i}\right){\eta_3^E},
\end{eqnarray*}
where we have used the fact that
\[
\|\nabla(b_E[\![\bm n \cdot \bm u_h]\!]_{E;T_i})\|_{T_i}\leq Ch_{T_i}^{-1}\|b_E[\![\bm n \cdot \bm u_h]\!]_{E;T_i}\|_{T_i}\leq Ch_{T_i}^{-1/2}\|[\![\bm n \cdot \bm u_h]\!]_E\|_{E}.
\]
Consequently,
\begin{align}\label{eta3E}
\eta_3^E\leq C\big(\|\bm e^0\|_{\omega_E}+\eta_3^{T_1}+\eta_3^{T_2}\big)\leq C\|\bm e^0\|_{\omega_E}.
\end{align}
Now collecting \eqref{irrota-01}, \eqref{eta3T}, and \eqref{eta3E}, we have that
\begin{align}\label{est-eta3}
	c\eta_3\leq\|\bm e^0\|\leq C\eta_3.
\end{align}
Similarly,
\begin{align}\label{eta0T}
\eta_0^T\leq C\left(\|\nabla\varepsilon\|_T+\|\bm e^0\|_T\right),
\end{align}
\begin{align}\label{eta0E}
\eta_0^E\leq C\left(\|\nabla\varepsilon\|_{\omega_T}+\|\bm e^0\|_{\omega_T}\right)+\eta_0^{T_1}+\eta_0^{T_2}\leq C\left(\|\nabla\varepsilon\|_{\omega_T}+\|\bm e^0\|_{\omega_T}\right).
\end{align}
Combining \eqref{irrota-01}, \eqref{irrota-varepsilon}, \eqref{eta0T}, and \eqref{eta0E}, we obtain that
\begin{align}\label{este0}
c(\eta_0+\eta_3)\leq\|\nabla \varepsilon\|+\|\bm e^0\|\leq C(\eta_0+\eta_3).
\end{align}

{\bf (ii) Estimation of the solenoidal part $\bm e^\bot$.}
We  start with proving the upper bound for $\eta_{1}^T$ by using $b_T$ again.
Employing the similar technique in \cite{ainsworth1997posteriori}, we have the following estimates for any $\bm v$ in finite dimensional spaces:
\begin{align}
	&\|\bm v\|_{T}^{2} \leq C \int_{T} b^2_{T} \bm v^{2}\d\bm x,\label{est1}\\
	&\left\|b^2_{T} \bm v\right\|_{T} \leq\|\bm v\|_{T}.\label{est2}
\end{align}
Setting $\bm \phi_h=\pi_{h} \bm{f}-(\nabla\times)^4{\bm{u}}_{h}-\bm u_h$, we have that
\begin{align*}
\left(\frac{\eta_{1}^T}{h_T^2}\right)^2&=\left\|\pi_{h} \bm{f}-(\nabla\times)^4{\bm{u}}_{h}-\bm u_h\right\|_T^2\\
&\leq C\int_T(\bm f-(\nabla\times)^4{\bm{u}}_{h}-{\bm{u}}_{h})b_T^2\bm \phi_h+(\pi_{h} \bm{f}-\bm f)b_T^2\bm \phi_h\d \bm x \quad \big(\text{By \eqref{est1}}\big)\\
&=C\Big(r_1( b_T^2\bm \phi_h)+\int_T(\pi_{h} \bm{f}-\bm f)b_T^2\bm \phi_h\d \bm x \Big)\quad \Big( b_T^2\bm \phi_h\in H_0(\text{curl}^2;\Omega)\Big)\\
&=C\Big(a(\bm e, b_T^2\bm \phi_h)+b(b_T^2\bm \phi_h,\varepsilon)+\int_T(\pi_{h} \bm{f}-\bm f)b_T^2\bm \phi_h\d \bm x \Big)\quad \big( \text{By } \eqref{error-equation-1}\big)\\
&\leq C \threeBar\bm e\threeBar_{T}\threeBar b_T^2\bm \phi_h\threeBar_{T}+C\big\|\nabla\varepsilon\big\|_T\left\|b_T^2\bm \phi_h\right\|_T+C\eta_{2}^Th_T^{-2}\left\|b_T^2\bm \phi_h\right\|_T.
\end{align*}
Due to the inverse inequality and \eqref{est2}, it holds that
\begin{align*}
\3barb_T^2\bm \phi_h\3bar_{T}&=\|b_T^2\bm \phi_h\|_T+\|(\nabla\times)^2b_T^2\bm \phi_h\|_T\\
&\leq\|b_T^2\bm \phi_h\|_T+C h_T^{-1}\|\nabla\times b_T^2\bm \phi_h\|_{T}\\
&\leq C h_T^{-2}\|b_T^2\bm \phi_h\|_{T}\leq C h_T^{-2}\bm \|\bm\phi_h\|_{T}.
\end{align*}
Thus we obtain that 
\begin{align*}
\left(\frac{\eta_{1}^T}{h_T^2}\right)^2
&\leq C\left( h_T^{-2}\3bar\bm e\3bar_{T}+\|\nabla\varepsilon\|_T+h_T^{-2}\eta_{2}^T\right)\|\bm \phi_h\|_T.
\end{align*}
Dividing the above inequality by $\left\|\bm \phi_h\right\|_T$ and multiplying by $h_T^2$, we obtain
\begin{align}\label{eta1T-upper}
	\eta_{1}^T\leq C\left(\3bar\bm e\3bar_{T}+h_T^2\|\nabla\varepsilon\|_T+\eta_{2}^T\right).
\end{align}

Next we  estimate the upper bound for $\eta_{1;1}^E$ by using the bubble functions $b_T, b_E$. Let $T_1$ and $T_2$ be two elements sharing the edge $E$.
We extend the jump $[\![\bm n\times(\nabla\times)^2\bm u_h]\!]_E$ defined on $E$ to two polynomial functions $[\![\bm n\times(\nabla\times)^2\bm u_h]\!]_{E;T_1}$
defined on $T_1$ and $[\![\bm n\times(\nabla\times)^2\bm u_h]\!]_{E;T_2}$ defined on $T_2$ such that, for $1\leq i\leq 2$,
\begin{align}\label{extension-estimate}
\|[\![\bm n\times(\nabla\times)^2\bm u_h]\!]_{E;T_i}\|_{T_i}\leq C h_{T_i}^{1/2}\|[\![\bm n\times(\nabla\times)^2\bm u_h]\!]_{E}\|_{E}.
\end{align}
Denote $ \psi_h|_{T_i}=[\![\bm n\times(\nabla\times)^2\bm u_h]\!]_{E;T_i}$ for $i=1, 2$ and $\bm \omega_{E,1}=(b_{T_1}-b_{T_2})b_E\psi_h\bm \tau_E$.
A simple calculation shows that
\begin{align*}
(\nabla\times\bm\omega_{E,1})|_E=\frac{27}{8}\left(\frac{h_E}{|T_1|}+\frac{h_E}{|T_2|}\right)b_E^2\psi_h.
\end{align*}
Similar to \eqref{est1} and \eqref{est2}, the following inequalities hold
\begin{align}
	&\| v\|_{E} \leq C\| b_Ev\|_{E},\label{est3}\\
	&\left\|(b_{T_1}-b_{T_2})b_E v\right\|_{T_1\cup T_2} \leq\|v\|_{T_1\cup T_2}.\label{est4}
\end{align}
Now we are ready to construct the upper bound for $\eta_{1;1}^E$:
\begin{eqnarray*}
&&h_E^{-1}\|[\![\bm n\times(\nabla\times)^2\bm u_h]\!]_E\|_E^2\\
&\leq &C\int_E[\![\bm n\times(\nabla\times)^2\bm u_h]\!]_E \nabla \times \bm \omega_{E,1}\d s\quad(\text{By \eqref{est3}})\\
&=&C\int_{T_1\cup T_2}(\nabla\times)^4\bm u_h\cdot \bm \omega_{E,1}-(\nabla\times)^2\bm u_h\cdot (\nabla\times)^2\bm \omega_{E,1}\d \bm x\quad(\text{By in tegration by parts})\\
&=&C\left(r_1( \bm \omega_{E,1})-(\bm f -\bm u_h-(\nabla\times)^4\bm u_h,\bm \omega_{E,1})\right)\quad\Big(\bm \omega_{E,1}\in H_0(\text{curl}^2;\Omega)\Big)\\[1.5mm]
&\leq & C \3bar\bm e\3bar_{T_1\cup T_2}\3bar\bm \omega_{E,1}\3bar_{T_1\cup T_2}+C\|\bm \omega_{E,1}\|_{T_1\cup T_2}\|\nabla\varepsilon\|_{T_1\cup T_2}\\[1.5mm]
&& +C\left(h_{T_1}^{-2}(\eta_{1}^{T_1}+\eta_{2}^{T_1})+h_{T_2}^{-2}(\eta_{1}^{T_2}+\eta_{2}^{T_2})\right)
\|\bm \omega_{E,1}\|_{T_1\cup T_2}.
\end{eqnarray*}
By applying the inverse inequality, \eqref{extension-estimate}, and \eqref{est4}, we get
\[\!|\!|\bm \omega_{E,1}\3bar_{T_1\cup T_2}\leq h_E^{-2}\|\bm \omega_{E,1}\|_{T_1\cup T_2}\leq h_E^{-3/2}\|[\![\bm n\times(\nabla\times)^2\bm u_h]\!]_E\|_E,\]
 which, together with \eqref{eta1T-upper}, leads to
\begin{align}\label{etaE-1}
\eta_{1;1}^E\leq C\left(\3bar\bm e\3bar_{T_1\cup T_2}+\eta_{2}^{T_1}+\eta_{2}^{T_2}+h_{T_1}^2\|\nabla\varepsilon\|_{T_1}+h_{T_2}^2\|\nabla\varepsilon\|_{T_2}\right).
\end{align}

The upper bound for $\eta_{1;2}^E$ can be constructed in a similar way.
Extend $[\![(\nabla\times)^3\bm u_h]\!]_{E}$ to $[\![(\nabla\times)^3\bm u_h]\!]_{E;T_i}$ on $T_i$ such that
\begin{align}\label{extension-estimate2}
\|[\![(\nabla\times)^3\bm u_h]\!]_{E;T_i}\|_{T_i}\leq C h_{T_i}^{1/2}\|[\![(\nabla\times)^3\bm u_h]\!]\|_{E}.
\end{align}
Denote $\bm \omega_{E,2}|_{T_i}:=b_E^2[\![(\nabla\times)^3\bm u_h]\!]_{E;T_i}\bm \tau_E$ with $\bm \tau_E$ such that $\bm n_E\times\bm \tau_E=1$.
Then $\bm n_E\times \bm\omega_{E,2}|_{E}=b_E^2[\![(\nabla\times)^3 \bm u_h]\!]_{E}$. Hence,
\begin{eqnarray*}
&&\|[\![(\nabla\times)^3\bm u_h]\!]_E\|_E^2
\lesssim \int_E[\![(\nabla\times)^3\bm u_h]\!]_E\bm n_E\times \bm \omega_{E,2} \d s\\
&= &-\left((\nabla\times)^4\bm u_h, \bm \omega_{E,2}\right)_{T_1\cup T_2}+\left((\nabla\times)^2\bm u_h,(\nabla\times)^2\bm \omega_{E,2}\right)_{T_1\cup T_2}+\int_E [\![\bm n\times(\nabla\times)^2\bm u_h]\!]_E\nabla\times\bm \omega_{E,2} \d s\\
&\le & C\left(r_1( \bm \omega_{E,2})-\left(\bm f-\bm u_h -(\nabla\times)^4\bm u_h,\bm\omega_{E,2}\right)+\int_E [\![\bm n\times(\nabla\times)^2\bm u_h]\!]_E\nabla\times\bm \omega_{E,2} \d s\right)\\
&\le & C h_E^{-3/2}\left(\eta_{1}^{T_1}+\eta_{2}^{T_1}+\eta_{1}^{T_2}+\eta_{2}^{T_2}+\eta_{1;1}^E+\3bar\bm e\3bar_{T_1\cup T_2}+h_{T_1}^2\|\nabla\varepsilon\|_{T_1}+h_{T_2}^2\|\nabla\varepsilon\|_{T_2}\right)\left\|[\![(\nabla\times)^3\bm u_h]\!]_E\right\|_{E}.
\end{eqnarray*}
Dividing the above inequality by $\left\|[\![(\nabla\times)^3\bm u_h]\!]_E\right\|_{E}$ and applying \eqref{eta1T-upper} and \eqref{etaE-1}, we obtain
\begin{align}\label{eta_12_E}
\eta_{1,2}^E\le C\left( \eta_{2}^{T_1}+\eta_{2}^{T_2}+\3bar\bm e\3bar_{T_1\cup T_2}+h_{T_1}^2\|\nabla\varepsilon\|_{T_1}+h_{T_2}^2\|\nabla\varepsilon\|_{T_2}\right).
\end{align}
Collecting \eqref{eta1T-upper},\eqref{etaE-1}, and \eqref{eta_12_E}, we have that
\begin{align}\label{eta1-upper}
	\eta_1\le C\left( \eta_{2}+\3bar\bm e\3bar+h^2\|\nabla\varepsilon\| \right).
\end{align}

It remains to construct the upper bound of $\bm e^{\bot}$.
For $\bm e^\bot\in X\subset H_0(\tc^2;\Omega)$, according to Lemma \ref{Helm}, $\bm e^{\perp}=\bm w+\nabla \psi$ with $\bm w\in \bm H^2(\Omega)$ and $\psi\in H_0^1(\Omega)$,
we have
\begin{align*}
\3bar\bm e^\bot\3bar^2=r_1( \bm e^\bot)=r_1( \bm w)+r_1( \nabla \psi).
\end{align*}
Due to the Galerkin orthogonality \eqref{orth-01}, for any $\bm w_h\in V_h$,  
\begin{eqnarray*}
&&r_1( \bm w)=r_1( \bm w-\bm w_h)\\
&=&\sum_{T\in \mathcal{T}_h}\Big(\left(\bm f-\bm u_h-(\nabla\times)^4\bm u_h, \bm w-\bm w_h\right)-\sum_{E\in\mathcal  E_h(T)}\int_E\bm n\times(\nabla\times)^2\bm u_h\nabla\times(\bm w-\bm w_h)\d s\\
&&-\sum_{E\in \mathcal  E_h(T)}\int_E(\nabla\times)^3\bm u_h \bm n\times (\bm w-\bm w_h)\d s\Big)\\
&\leq & \sum_{T\in \mathcal{T}_h}\Big(\|\bm\pi _h\bm f-\bm u_h-(\nabla\times)^4\bm u_h\|_T\|\bm w-\bm w_h\|_T+\|\bm\pi_h\bm f-\bm f\|_T\|\bm w-\bm w_h\|_T\Big)\\
&&+\sum_{E\in \mathcal{E}_h^{\text{int}}}\Big(\|[\![\bm n \times (\nabla\times)^2\bm u_h]\!]_E\|_E\|\nabla\times(\bm w-\bm w_h)\|_E+\|[\![(\nabla\times)^3\bm u_h]\!]_E\|_E\|\bm n\times(\bm w-\bm w_h)\|_E\Big)\\
&\leq & C\bigg(\sum_{T\in \mathcal{T}_h}\Big(h_T^4\|\bm\pi _h\bm f-\bm u_h-(\nabla\times)^4\bm u_h\|_T^2+h_T^4\|\bm\pi_h \bm f-\bm f\|_T^2+\sum_{E\in\mathcal  E_h(T)} h_E^3\|[\![(\nabla\times)^3\bm u_h]\!]_E\|_E^2\Big)\\
&&\qquad+\sum_{E\in \mathcal  E_h(T)} h_E\|[\![\bm n \times (\nabla\times)^2\bm u_h]\!]_E\|_E^2\bigg)^{1/2}\\
&&\bigg(\sum_{T\in \mathcal{T}_h}\Big(h_T^{-4}\|\bm w-\bm w_h\|_T^2+\sum_{E\in \mathcal  E_h(T)} h_E^{-1}\|\nabla\times(\bm w-\bm w_h)\|_E^2+\sum_{E\in \mathcal  E_h(T)} h_E^{-3}\|\bm w-\bm w_h\|_E^2\Big)\bigg)^{1/2}.
\end{eqnarray*}
Let $\bm w_h=\Pi_{C}\bm w$. According to the  trace inequality and Theorem \ref{clmt-error}, we obtain
\begin{align*}
\sum_{T\in \mathcal{T}_h}\Big(h_T^{-4}\|\bm w-\bm w_h\|_T^2+\sum_{E\in \mathcal  E_h(T)} h_E^{-1}\|\nabla\times(\bm w-\bm w_h)\|_E^2+\sum_{E\in \mathcal  E_h(T)} h_E^{-3}\|\bm w-\bm w_h\|_E^2\Big)
\le C \|\bm w\|_2^2.
\end{align*}
Furthermore, we use \eqref{decom-01}, \eqref{decom-02}, and the Poincar\'e inequality to obtain
\begin{align*}
r_1( \bm w)&\leq C(\eta_1+\eta_2)\|\bm w\|_2\leq C(\eta_1+\eta_2)\|\nabla\times\bm e^\bot\|_1\leq C(\eta_1+\eta_2)\3bar\bm e^\bot\3bar.
\end{align*}
Similar to the proof of \eqref{irrota-varepsilon}, using  \eqref{decom-03}, it holds that
\begin{align*}
r_1( \nabla \psi)\leq C(\eta_0+\eta_3)\|\nabla \psi\|\leq C (\eta_0+\eta_3)\3bar\bm e^\bot\3bar.
\end{align*}
Hence,
\begin{align}\label{e-upper}
\3bar\bm e^\bot\3bar\leq C(\eta_0+\eta_1+\eta_2+\eta_3).
\end{align}
Combining \eqref{irrota-varepsilon}, \eqref{est-eta3}, \eqref{este0}, \eqref{eta1-upper}, and \eqref{e-upper}, we obtain Theorem \ref{posteriori-estimate-main}.
\end{proof}


When $\bm f=(\lambda_h+1)\bm u_h$, according to the definition of $\eta_0, \eta_2$, and $\eta_3$,  we have that $\eta_0=\lambda_h\eta_3$ and $\eta_2=0$.
The following error estimator is a direct consequence of Theorem \ref{posteriori-estimate-main} and \eqref{error-eig-02}.
\begin{theorem}
For $h$ small enough, there exist constants $c_1, C_1$, and $C_2$ such that
\begin{align*}
c_1(\eta_1+\eta_3)\leq\3bar\bm u-\bm u_h\3bar\leq C_1(\eta_1+(\lambda_h+1)\eta_3),
\end{align*}
and
\begin{align*}
|\lambda-\lambda_h|\leq C_2(\eta_1+(\lambda_h+1)\eta_3)^2,
\end{align*}
where $\eta_1$ and $\eta_3$ are respectively defined in \eqref{errore1} and \eqref{errore3} with $\bm f=(\lambda_h+1)\bm u_h$.
\end{theorem}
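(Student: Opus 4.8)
The plan is to read the estimate off from Theorem~\ref{lemm7}, which reduces the eigenvector error to a source–problem error, combined with Theorem~\ref{posteriori-estimate-main} applied to the source problem whose data is $\bm f=(\lambda_h+1)\bm u_h$. The key preliminary observation is that this particular data lands in case~(b) of the discussion preceding Theorem~\ref{posteriori-estimate-main}: $\bm f$ is piecewise polynomial, and since $\bm u_h\in X_h$ one has $(\bm f,\nabla q_h)=(\lambda_h+1)(\bm u_h,\nabla q_h)=0$ for all $q_h\in S_h^0$, so the discrete pressure vanishes and $\pi_h\bm f=\bm f$. Moreover, by \eqref{omegah} and the definition of $R_h$, the finite element solution of this source problem is precisely $R_h\bm\omega^h=\bm u_h$, so the source error is $\bm e=\bm\omega^h-R_h\bm\omega^h$, and the estimator quantities $\eta_1,\eta_3$ built in Theorem~\ref{posteriori-estimate-main} from $\bm f$ and the FEM solution coincide with $\eta_1,\eta_3$ in the present statement.

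First I would substitute the two identities $\eta_2=0$ and $\eta_0=\lambda_h\eta_3$ (the latter because $\bm f-\bm u_h=\lambda_h\bm u_h$, the former because $\bm f-\pi_h\bm f=0$) into the sharper two-sided bound of Theorem~\ref{posteriori-estimate-main}, the one that estimates $\3bar\bm e\3bar$ alone. This yields, for $h$ small,
\begin{align*}
\gamma_3(\eta_1+\eta_3)-\gamma_4 h^2\lambda_h\,\eta_3\ \le\ \3bar\bm\omega^h-R_h\bm\omega^h\3bar\ \le\ \Gamma_3\bigl(\eta_1+(\lambda_h+1)\eta_3\bigr).
\end{align*}
Since $\lambda_h\to\lambda$ stays bounded, for $h$ small enough $\gamma_4 h^2\lambda_h\le\gamma_3/2$, so the left-hand side is bounded below by $\tfrac{\gamma_3}{2}(\eta_1+\eta_3)$. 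Then I would invoke the equivalence \eqref{the-05} of Theorem~\ref{lemm7}, namely $c\3bar\bm\omega^h-R_h\bm\omega^h\3bar\le\3bar\bm u-\bm u_h\3bar\le C\3bar\bm\omega^h-R_h\bm\omega^h\3bar$, to transfer both bounds to $\3bar\bm u-\bm u_h\3bar$; this gives the first displayed inequality with $c_1=c\gamma_3/2$ and $C_1=C\Gamma_3$. Finally, squaring the upper bound and inserting it into \eqref{error-eig-02} produces $|\lambda-\lambda_h|\le C\3bar\bm u-\bm u_h\3bar^2\le C_2\bigl(\eta_1+(\lambda_h+1)\eta_3\bigr)^2$ with $C_2=CC_1^2$.

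The argument is essentially bookkeeping once Theorems~\ref{lemm7} and~\ref{posteriori-estimate-main} are in hand, so there is no deep obstacle; the two points that need a little care are (i) verifying rigorously that the data $(\lambda_h+1)\bm u_h$ places us in case~(b), so that the $\eta_2$ and $\|\nabla\varepsilon\|$ contributions drop out and only the clean indicator $\eta_1+\eta_3$ remains, and (ii) absorbing the residual term $h^2\eta_0=h^2\lambda_h\eta_3$ in the lower bound, which is exactly where the ``$h$ small enough'' hypothesis — with a threshold depending harmlessly on the convergent, hence bounded, sequence $\lambda_h$ — enters.
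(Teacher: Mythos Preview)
Your proposal is correct and follows essentially the same route as the paper, which simply states that the theorem ``is a direct consequence of Theorem~\ref{posteriori-estimate-main} and \eqref{error-eig-02}'' after noting $\eta_0=\lambda_h\eta_3$ and $\eta_2=0$. You have been more explicit than the paper in spelling out the role of Theorem~\ref{lemm7}, which is indeed required to pass from the source--problem error $\3bar\bm\omega^h-R_h\bm\omega^h\3bar$ to the eigenvector error $\3bar\bm u-\bm u_h\3bar$; the paper's one-line justification tacitly assumes this step.
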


\section{Numerical Examples}
\subsection{A priori error estimate}
Consider three domains:
\begin{itemize}
\item $\Omega_1$: the unit square given by $(0, 1) \times (0, 1)$,
\item $\Omega_2$: the L-shaped domain given by $(0, 1) \times (0, 1) \slash [1/2, 1) \times (0, 1/2]$,
\item $\Omega_3$: given by $(0, 1) \times (0, 1) \slash [1/4, 3/4] \times [1/4, 3/4]$.
\end{itemize}
The initial meshes of the domains are shown in Figure \ref{fig1}. In Tables \ref{tab1}, \ref{tab3}, and \ref{tab5}, we list the first five eigenvalues.
Tables \ref{tab2}, \ref{tab4}, and \ref{tab6} show the convergence rates of the relative errors for the first eigenvalues, which agree with the theory.

\begin{figure}
	\centering
	\includegraphics[width=0.3\linewidth, height=0.2\textheight]{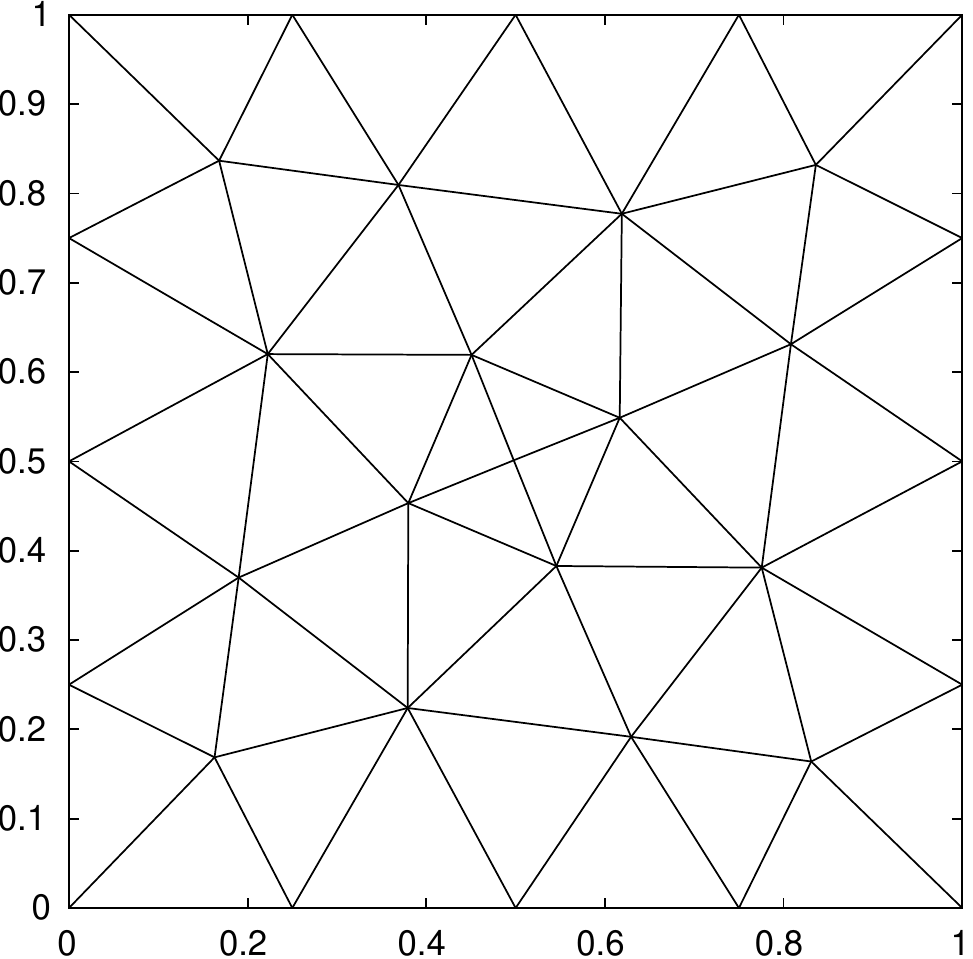}
	\includegraphics[width=0.3\linewidth, height=0.2\textheight]{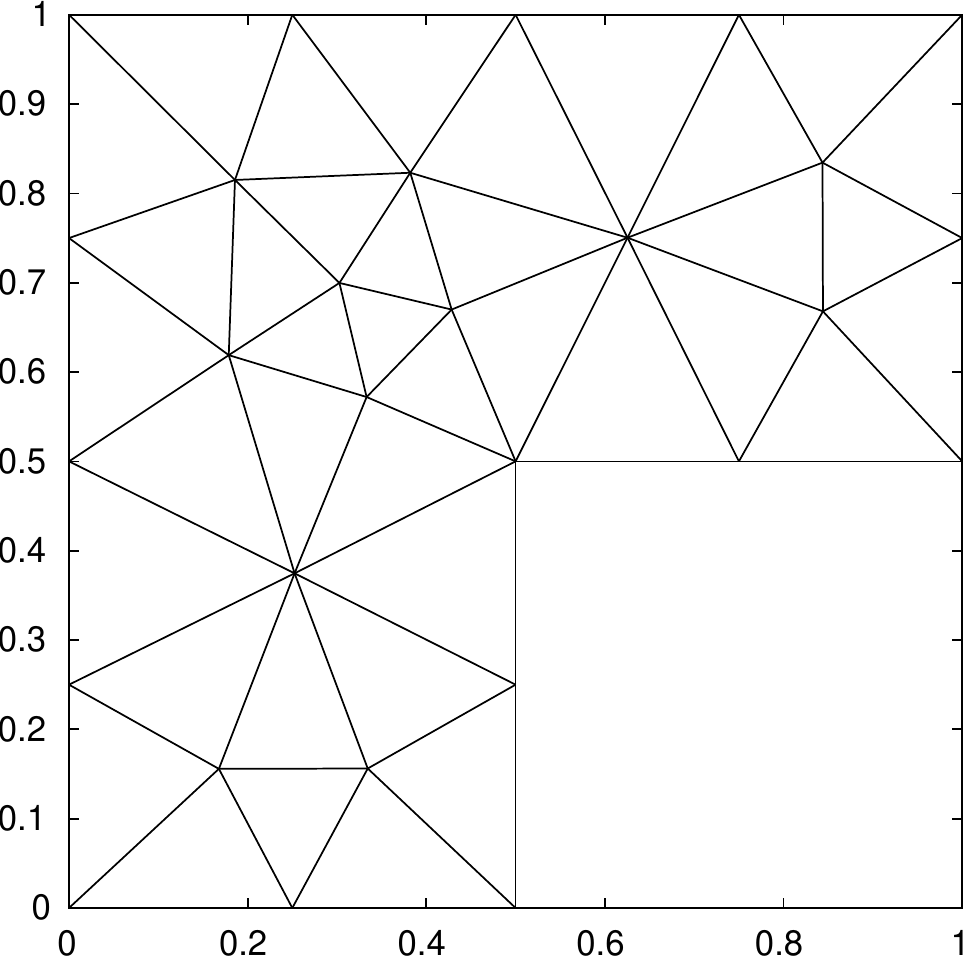}
	\includegraphics[width=0.3\linewidth, height=0.2\textheight]{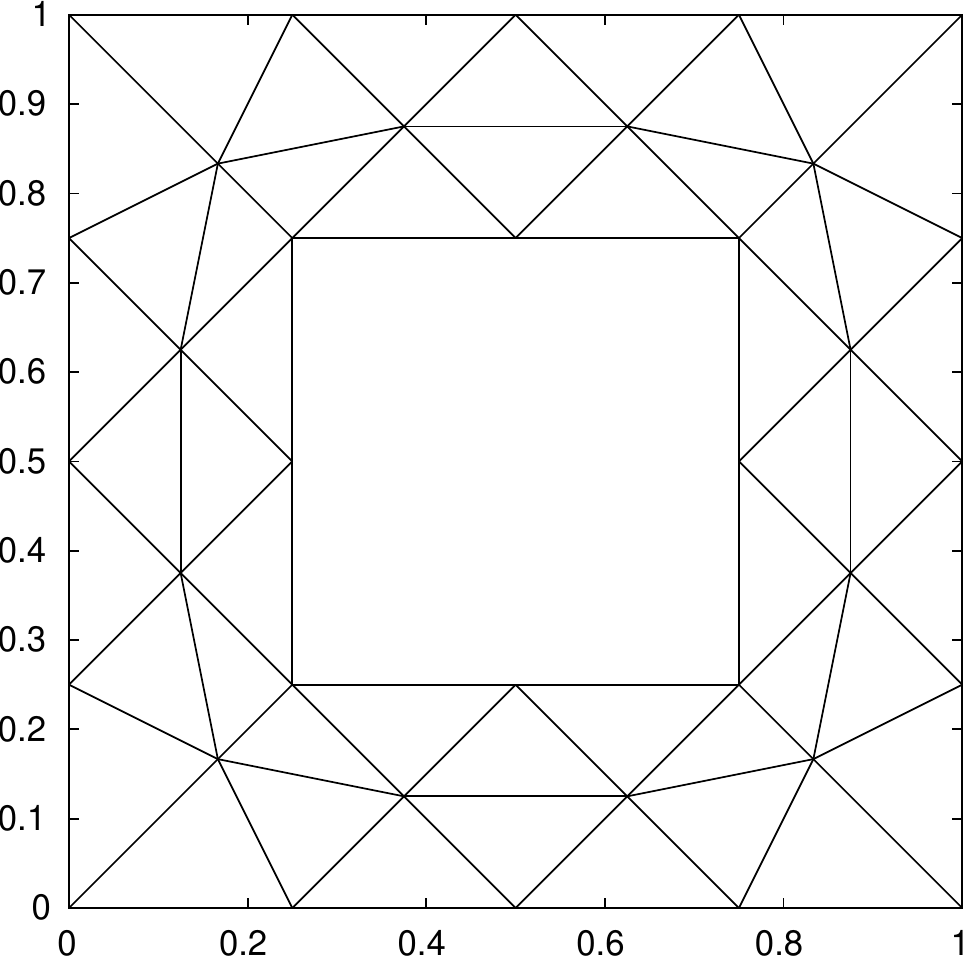}
	\caption{Sample meshes for $\Omega_1$ (left), $\Omega_2$ (middle), and $\Omega_3$ (right).}
	\label{fig1}
\end{figure}

\begin{table}[h]
	\centering
	\caption{The first 5 eigenvalues  of $\Omega_1$  with $k=4$.} \label{tab1}
	\begin{tabular}{cccccc}
		\hline
		$h$    &$\lambda_1^h$&$\lambda_2^h$&$\lambda_3^h$&$\lambda_4^h$&$\lambda_5^h$\\
		\hline
        $1/4$&7.08101988e+02&7.08102390e+02&2.35145718e+03&4.25922492e+03&5.02522026e+03\\
		$1/8$&7.07978763e+02&7.07978786e+02&2.35006082e+03&4.25597055e+03&5.02401495e+03\\
	   $1/16$&7.07971973e+02&7.07971975e+02&2.34999027e+03&4.25582307e+03&5.02399272e+03\\
	   $1/32$&7.07971564e+02&7.07971564e+02&2.34998613e+03&4.25581473e+03&5.02399235e+03\\
	   $1/64$&7.07971528e+02&7.07971555e+02&2.34998587e+03&4.25581421e+03&5.02399235e+03\\
		\hline
	\end{tabular}
\end{table}

\begin{table}[h]
	\centering
	\caption{Convergence rate for $\Omega_1$ with $k=4$ (relative error).} \label{tab2}
	\begin{tabular}{cccc}
		\hline
		$h$    &$\lambda_1^h$& error& order\\
		\hline
		\hline
		$1/4$&7.08101988e+02&1.74021691e-04&-\\
		$1/8$&7.07978763e+02&9.59045415e-06&4.1815\\
	   $1/16$&7.07971973e+02&5.77922813e-07&4.0527\\
	   $1/32$&7.07971564e+02&5.08588883e-08&3.5063 \\
	   $1/64$&7.07971528e+02&-&-\\
		\hline
	\end{tabular}
\end{table}

\begin{table}[h]
	\centering
	\caption{The first 5 eigenvalues of $\Omega_2$ with $k=4$.} \label{tab3}
	\begin{tabular}{cccccc}
		\hline
		$h$    &$\lambda_1^h$&$\lambda_2^h$&$\lambda_3^h$&$\lambda_4^h$&$\lambda_5^h$\\
		\hline
       $1/4$&5.34885649e+02&1.57586875e+03&6.10288551e+03&6.40711482e+03&1.09459861e+04\\
		$1/8$&5.35061810e+02&1.57477474e+03&6.09556539e+03&6.37916246e+03&1.09184358e+04\\
	   $1/16$&5.35222062e+02&1.57468831e+03&6.09528577e+03&6.37104166e+03&1.09152964e+04\\
	   $1/32$&5.35292267e+02&1.57467206e+03&6.09528045e+03&6.36787675e+03&1.09143027e+04\\
	   $1/64$&5.35320748e+02&1.57466664e+03&6.09528434e+03&6.36661570e+03&1.09139180e+04\\
		\hline
	\end{tabular}
\end{table}

\begin{table}[h]
	\centering
	\caption{Convergence rate for $\Omega_2$ with $k=4$ (relative error).} \label{tab4}
	\begin{tabular}{cccc}
		\hline
		$h$    &$\lambda_1^h$& error& order\\
		\hline
		$1/4$&5.34885649e+02&3.29341761e-04&-\\
		$1/8$&5.35061810e+02&2.99502830e-04&0.1370\\
	   $1/16$&5.35222062e+02&1.31169871e-04&1.1911\\
	   $1/32$&5.35292267e+02&5.32057764e-05&1.3018\\
	   $1/64$&5.35320748e+02&-&-\\
		\hline
	\end{tabular}
\end{table}

\begin{table}[h]
	\centering
	\caption{The first 5 eigenvalues of $\Omega_3$ with $k=4$.} \label{tab5}
	\begin{tabular}{cccccc}
		\hline
		$h$    &$\lambda_1^h$&$\lambda_2^h$&$\lambda_3^h$&$\lambda_4^h$&$\lambda_5^h$\\
		\hline
        $1/4$&9.43570924e+02&9.43570924e+02&3.35118080e+03&5.10757870e+03&1.03672699e+04\\
		$1/8$&9.40543704e+02&9.40543704e+02&3.33230800e+03&5.11255084e+03&1.03470233e+04\\
	   $1/16$&9.39507116e+02&9.39507116e+02&3.32612997e+03&5.11519580e+03&1.03445476e+04\\
	   $1/32$&9.39103168e+02&9.39103168e+02&3.32373447e+03&5.11630255e+03&1.03438189e+04\\
	   $1/64$&9.38943028e+02&9.38943036e+02&3.32278551e+03&5.11674950e+03&1.03435487e+04\\
		\hline
	\end{tabular}
\end{table}

\begin{table}[h]
	\centering
	\caption{Convergence rate for $\Omega_3$ with $k=4$ (relative error).} \label{tab6}
	\begin{tabular}{cccc}
		\hline
		$h$    &$\lambda_1^h$& error& order\\
		\hline
		$1/4$&9.43570924e+02&3.20825910e-03&-\\
		$1/8$&9.40543704e+02&1.10211572e-03&1.5415\\
	   $1/16$&9.39507116e+02&4.29957430e-04&1.3580\\
	   $1/32$&9.39103168e+02&1.70524522e-04&1.3342\\
	   $1/64$&9.38943028e+02&-&-\\
		\hline
	\end{tabular}
\end{table}

\subsection{A posteriori error estimates}

Figure \ref{fig_rate} shows global error estimators  and the relative errors of some simple eigenvalues for the three domains.
It can be observed that both the relative errors and the estimators have the same convergence rates.
Figure \ref{fig_dist} shows the distribution of the local estimators. The estimators are large at corners and catch the singularities effectively.

\begin{figure} \centering
\subfigure[the third eigenvalue of $\Omega_1$] { \label{fig:a}
\includegraphics[width=0.4\columnwidth]{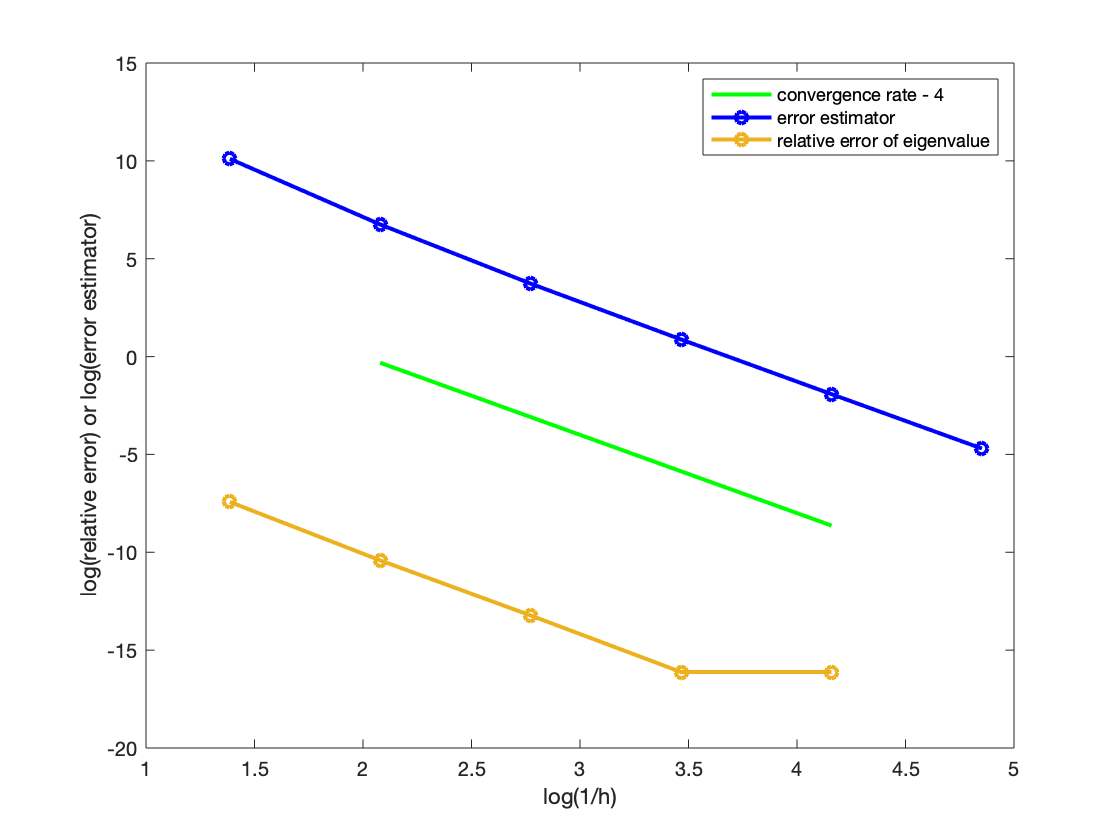}
}
\subfigure[the first eigenvalue on $\Omega_2$] { \label{fig:b}
\includegraphics[width=0.4\columnwidth]{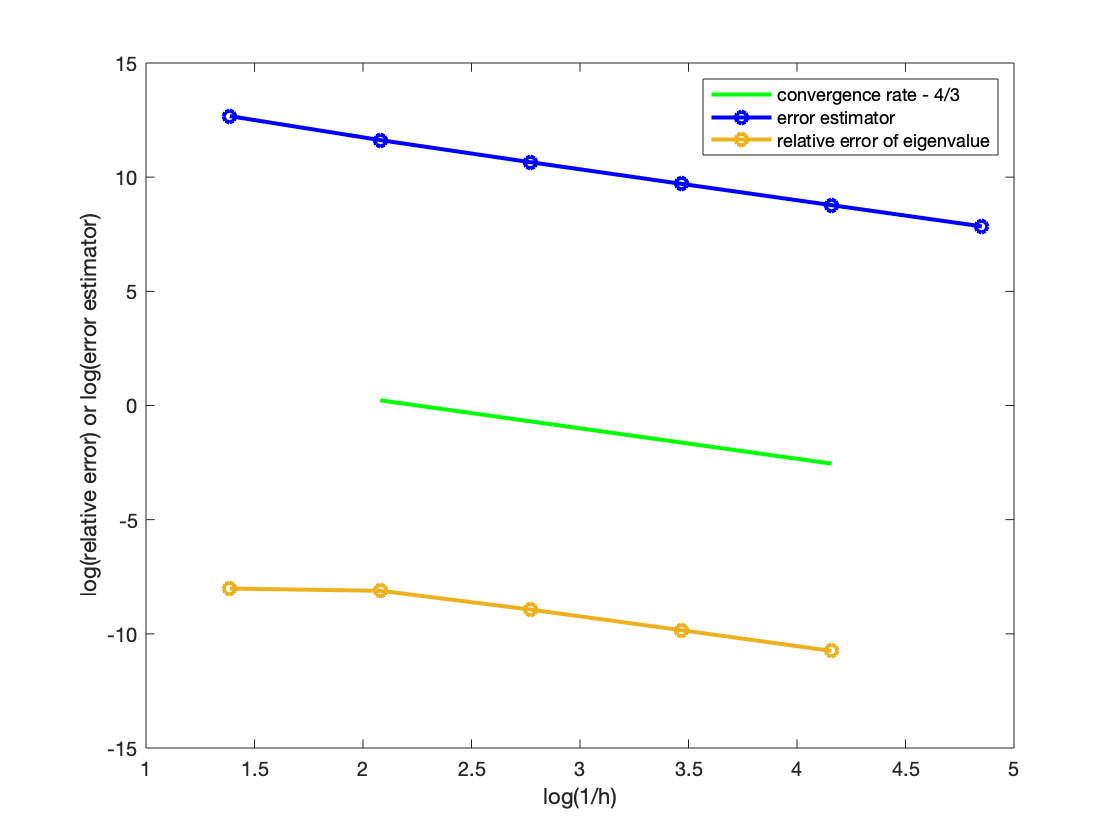}
}
\subfigure[the third eigenvalue on $\Omega_3$] { \label{fig:b}
\includegraphics[width=0.4\columnwidth]{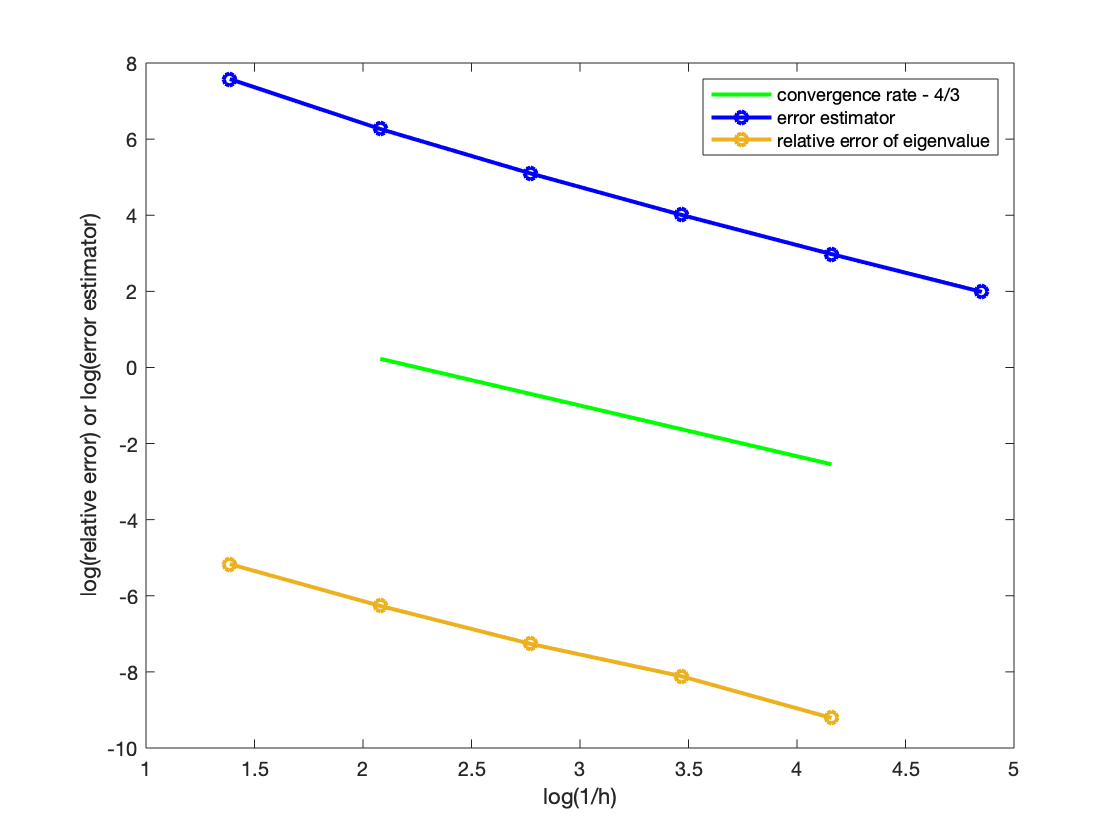}
}
\caption{The convergence rates of error estimators and the relative errors} \label{fig_rate}
\label{fig}
\end{figure}

\begin{figure}\label{fig_dist}
	\includegraphics[scale=0.22]{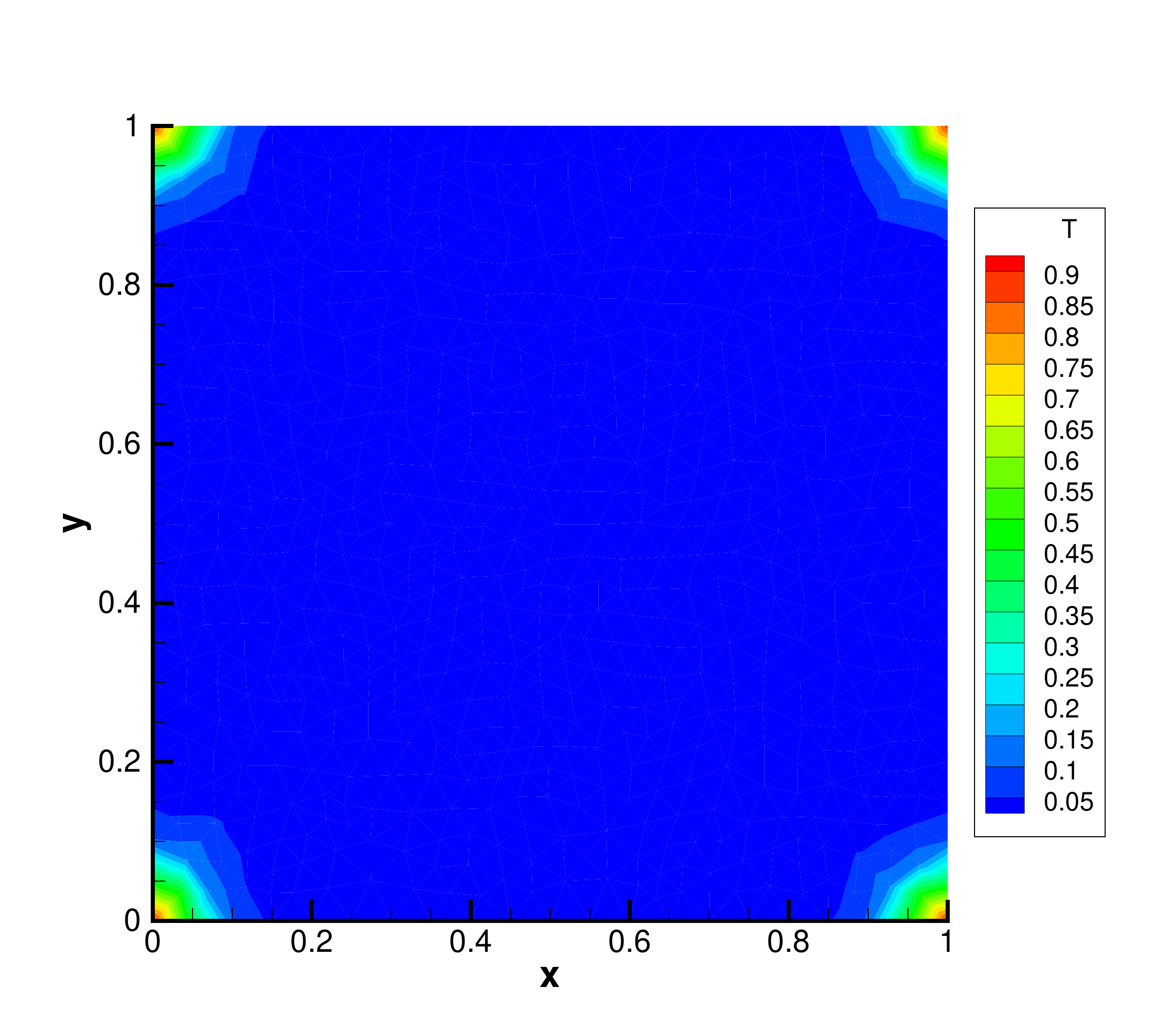}
	\includegraphics[scale=0.22]{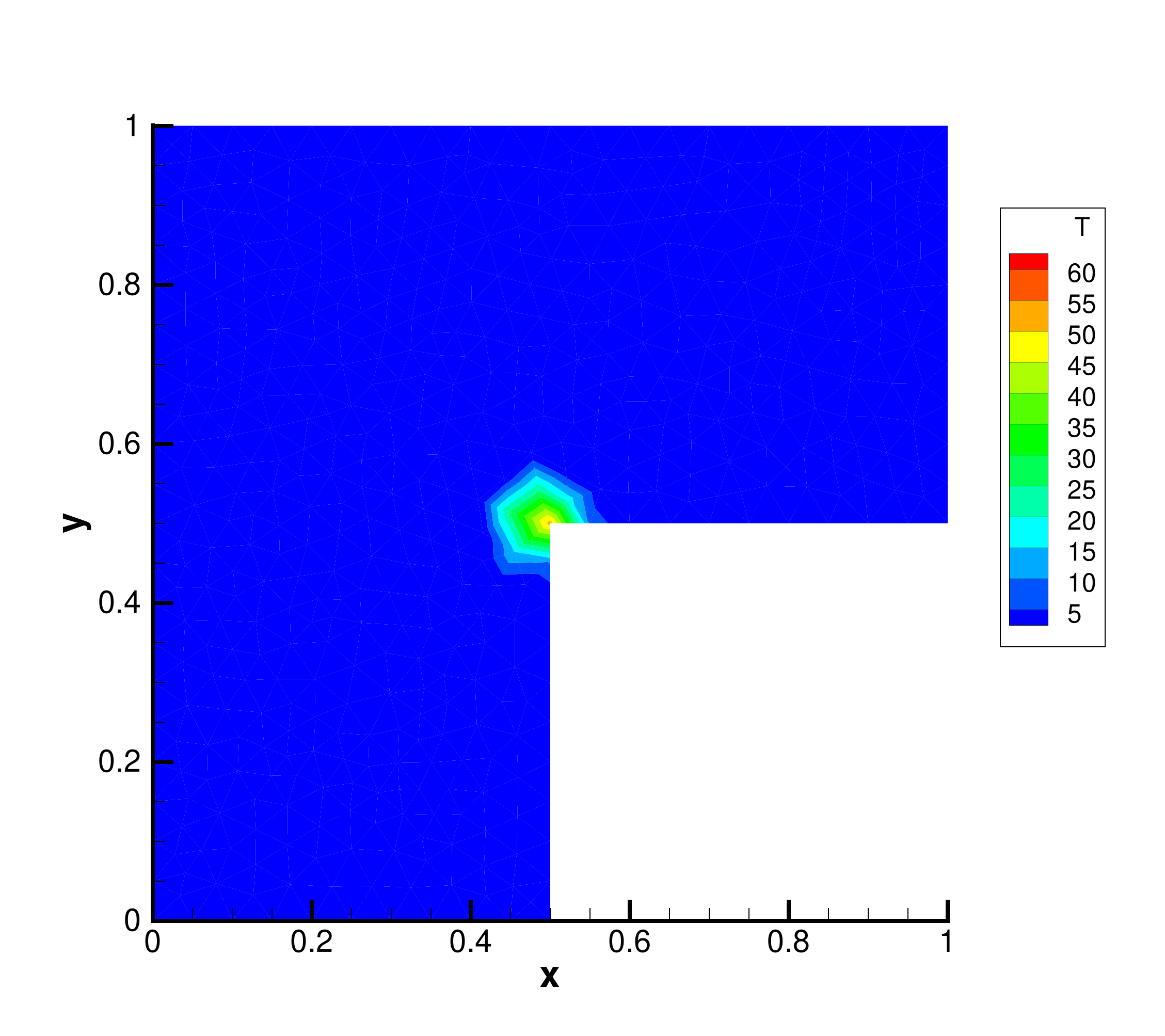}
	\includegraphics[scale=0.22]{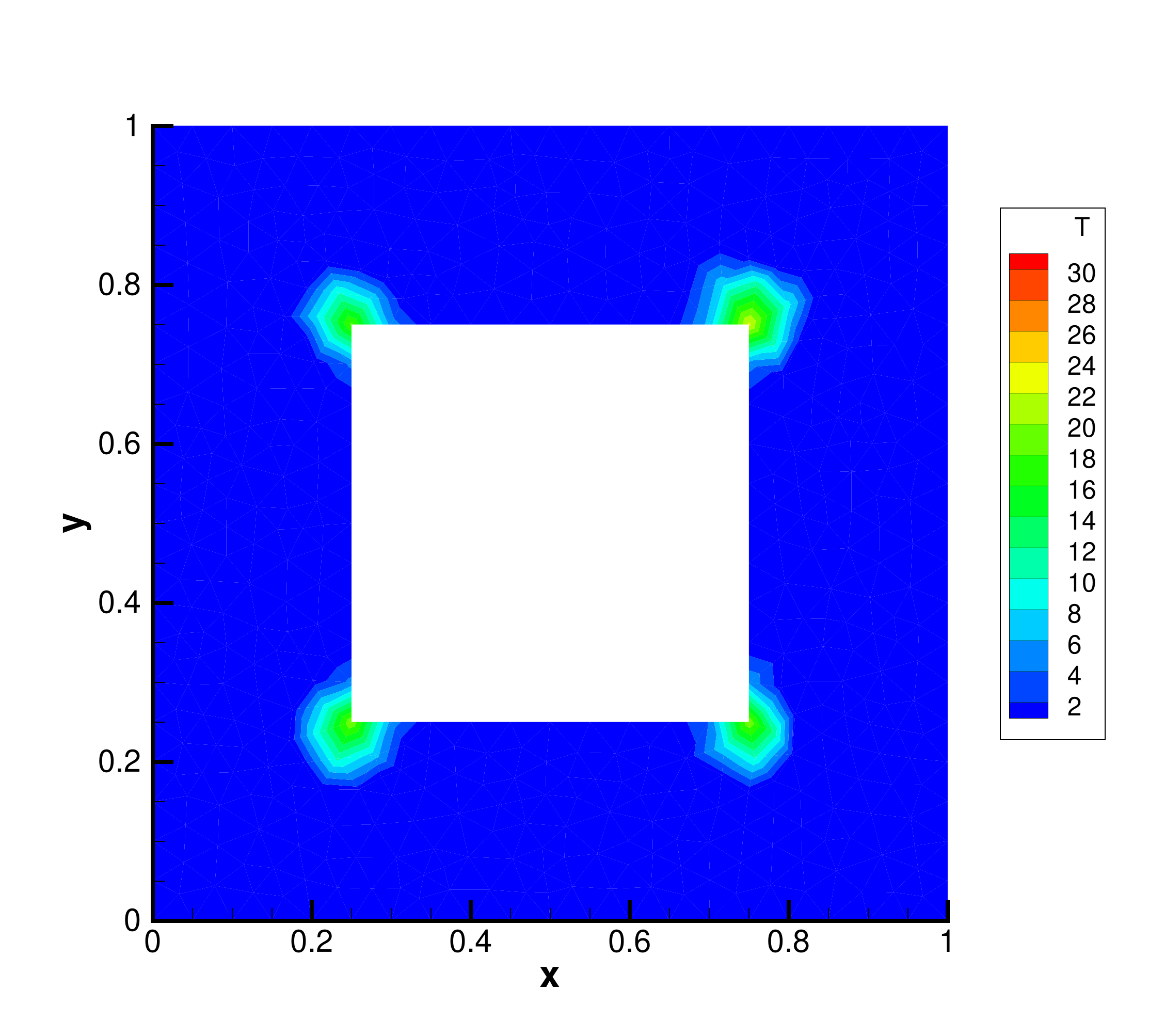}
	\caption{The local estimators.}
	\label{fig_dist}
\end{figure}

\section{Conclusion}
A $H(\tc^2)$-conforming element is proposed for the quad-curl problem in 2D.
We construct a priori and robust a posteriori error estimates for the eigenvalue problem.
Due to a new decomposition for the solution for the quad-curl problem,
the theory assumes no extra regularity of the eigenfunctions. In future, we plan to use the estimator to develop adaptive finite element methods. 
The 3D counterpart is anther interesting but challenging topic.

\bibliographystyle{plain}
\bibliography{QuadCurlEIg2DR1}{}
~\\
\end{document}